 \newtheorem{thm}{Theorem}[section]
 \newtheorem{cor}[thm]{Corollary}
 \newtheorem{lem}[thm]{Lemma}
 \newtheorem{prop}[thm]{Proposition}
 \theoremstyle{definition}
 \newtheorem{defn}[thm]{Definition}
 \newtheorem{rem}[thm]{Remark}
 \numberwithin{equation}{section}
 \DeclareMathOperator{\Span}{Span}
\begin{document}

\title[Integral Lattices of the $SU(2)$-TQFT-Modules]
{Integral Lattices of the $SU(2)$-TQFT-Modules}
\author{Khaled Qazaqzeh}

\address{Department of Mathematics, Yarmouk University, Irbed 21163,
Jordan}
\email{qazaqzeh@yu.edu.jo}
\urladdr{http://faculty.yu.edu.jo/qazaqzeh/}


\keywords{integral lattices, $SU(2)$-TQFT-theory, Frohman Kania-Bartoszynska ideal}
\thanks{I thank the ICTP and Yarmouk University for supporting me to present this work at the international conference on Quantum Topology Institute of Mathematics, VAST, Hanoi, Vietnam, August 6-12, 2007}
\subjclass[2000]{57R56 {Primary}; 57M27 {Secondary}}


\begin{abstract}

We find bases for naturally defined lattices over certain
rings of integers in the $SU(2)$-TQFT-theory modules  of surfaces.
We consider the TQFT where the Kauffman's $A$ variable is a  root of
unity of order four times an odd prime. As an application, we show
that the Frohman Kania-Bartoszynska ideal invariant for 3-manifolds with boundary
using the $SU(2)$-TQFT-theory
is equal to the product of the ideals using the $2^{'}$-theory and the
$SO(3)$-TQFT-theory under a certain change of coefficients.

\end{abstract}

\maketitle

\section*{Introduction}
In this paper, we let $p$ denote an odd prime or twice an odd prime unless specified otherwise. Also, we let $\Sigma$
denote a connected surface of genus $g$ without colored points.

Based on the integrality results of the Witten-Reshetikhin-Tureav
quantum invariants of closed 3-manifolds ~\cite{GR97,M}, an integral
functor $\mathcal{S}_{p}$ is defined in ~\cite{G04}. This is
a functor that associates to a surface $\Sigma$, a lattice
$\mathcal{S}_{p}(\Sigma)$ over the cyclotomic ring of
integers

\[ \mathcal{O}_{p} =
  \begin{cases}
  \mathbb{Z}[A_{p}],
     & \text{if $p \equiv  \quad -1 \pmod {4}$},
     \\
    {\mathbb{Z}[\alpha_{p}]}, & \text{if $p \equiv 1 $ or $ 2 \pmod
    {4}$},
  \end{cases}
\]
here and elsewhere $A_{p}$, $\alpha_{p}$ are $\zeta_{2p}$ and
$\zeta_{4p}$ respectively for $p\geq 3$.

Gilmer in ~\cite{G04} showed that these
lattices are free in the case of $p$ is an odd prime
and projective in the case of $p$ is twice an odd
prime. In ~\cite{GMW04}, the authors gave explicit bases for
these lattices in genus one and two at roots of unity of odd prime order.
Recently, Gilmer and Masbaum announced a basis for $\mathcal{S}_{p}(\Sigma)$
and hence they gave an independent proof of freeness in the case of $p$ is an
odd prime.

In the 2-theory, the author showed that the lattice $\mathcal{S}_{2}(\Sigma)$
is free by constructing an explicit basis. Also in \cite{Q}, he
showed that the lattice $\mathcal{S}_{p}(S^{1}\times S^{1})$ is free by constructing
two explicit bases in the case of $p$ is twice an odd prime. In this paper, we consider
the prime version of the 2-theory and give an explicit basis for the lattice
$\mathcal{S}^{'}_{2}(\Sigma)$ 
that plays a crucial role in constructing a basis for $\mathcal{S}_{p}(\Sigma)$ for $p$ twice an odd prime.

Frohman and Kania-Bartoszynska in ~\cite{FK} defined an ideal
invariant of 3-manifolds with boundary using the $SU(2)$-TQFT-theory
that is not finitely generated by definition.
In fact, they make use of another ideal that they defined to give an
estimate for this ideal. Later on, Gilmer and Masbaum in ~\cite{GM04}
defined an analogous ideal invariant using the $SO(3)$-TQFT-theory for
3-manifolds with boundary. Also, they showed that this ideal
is finitely generated by giving a finite set
of generators.
In this paper, we give a similar result concerning the ideal using the
$SU(2)$-TQFT-theory. Moreover, we show that this ideal is equal to the product
of the ideals using the $2^{'}$- and the $SO(3)$-TQFT-theories.

In the first section, we review the integral TQFT-functor that was
first introduced in ~\cite{G04}. The quantization functor for $p=2$ is
discussed in the next section, following ~\cite{BHMV3}. Also in this
section, we give a basis for $\mathcal{S}^{'}_{2}(\Sigma)$.
We reformulate some of the results given in
\cite{BHMV3} concerning the relation between the $SO(3)$- and the $SU(2)$-TQFTs
in the third section to serve our need.
In the fourth section,
we list some of the results concerning the surface $S^{1}\times S^{1}$ stated in \cite{GM04,GMW04,Q}.
The main result will be given in the fifth section. In the last section, we give an application of our main result concerning the Frohman Kania-Bartoszynska ideal.


\section{The Integral TQFT-functor}
We consider the $(2+1)$-dimensional TQFT constructed as the main
example of ~\cite{BHMV3} with some modifications. In
particular, we use the cobordism category $\mathcal{C}$ discussed
in ~\cite{G04,GQ} where the 3-manifolds have banded links and
surfaces without colored points. Hence the objects are
oriented surfaces with extra structure (Lagrangian subspaces of
their first real homology). The cobordisms are equivalence classes
of compact oriented 3-manifolds with extra structure (an integer
weight) with banded links sitting inside of them. Two cobordisms
with the same weight are said to be equivalent if there is an
orientation preserving diffeomorphism that fixes the boundary.

We restrict this section for the case where $p\geq 3$ and the case where $ p = 2$ will
discussed in the next section.

Now, we consider the TQFT-functor ($V_{p},Z_{p}$) from
$\mathcal{C}$  to the category of finitely generated free
$k_{p}$-modules, where

\[ k_{p} =
  \begin{cases}
  \mathbb{Z}[A_{p},\frac{1}{p}],
     & \text{if $p \equiv  \quad -1 \pmod{4}$};
     \\
    {\mathbb{Z}[\alpha_{p},\frac{1}{p}]}, & \text{if $p \equiv 1 $ or $2
    \pmod{4}$}.
  \end{cases}
\]

The functor ($V_{p},Z_{p}$) is defined as
follows. $V_{p}(\Sigma)$ is a quotient of the $k_{p}$-module
generated by all cobordisms with boundary $\Sigma$, and $Z_{p}(M)$
is the $k_{p}$-linear map from $V_{p}(\Sigma)$ to
$V_{p}(\Sigma^{'})$ (where $\partial M = -\Sigma\coprod
\Sigma^{'}$) induced by gluing representatives of elements of
$V_{p}(\Sigma)$ to $M$ along $\Sigma$ via the identification map.

If $M$ is a closed cobordism, then $Z_{p}[M]$ is the
multiplication by the scalar $ \langle M \rangle_{p}$ defined in
~\cite[Section.\,2]{BHMV3}. This invariant is normalized in two other
ways. The first normalization of this invariant is
$I_{p}(M)=\mathcal{D}_{p}\langle M_{\flat}\rangle_{p}$. Here and
elsewhere $M_{\flat}$ is the 3-manifold $M$ with a reassigned
weight zero, and $\mathcal{D}_{p}=\langle
S^{3}_{\flat}\rangle_{p}^{-1}$. The second normalization is
$\theta_{p}(M)=\mathcal{D}_{p}^{\beta_{1}(M)+1}\langle
M_{\flat}\rangle_{p}$, i.e
$\theta_{p}(M)=\mathcal{D}_{p}^{\beta_{1}(M)}I_{p}(M)$.

If $\partial M = \Sigma$ and $M$ is considered as a cobordism from
$\emptyset$ to $\Sigma$, then $Z_{p}(M)(1)\in V_{p}(\Sigma)$ is
denoted by $[M]_{p}$ and called a \textsl{vacuum state} and it is
\textsl{connected} if $M$ is connected. Finally, note that $V_{p}$
is generated over $k_{p}$ by all vacuum states.

The modules $V_{p}(\Sigma)$ are free modules over $k_{p}$, and carry a nonsingular
Hermitian sesquilinear form
 \[
 \langle \ , \ \rangle_{\Sigma} : V_{p}(\Sigma)\times V_{p}(\Sigma)
 \rightarrow k_{p},
 \]
given by
 \begin{equation}\label{e:form1}
 \langle[M_{1}],[M_{2}]\rangle_{\Sigma} = \langle
 M_{1}\cup_{\Sigma}-M_{2}\rangle_{p}.
 \end{equation}
Here -$M$ is the cobordism $M$ with the orientation reversed and
multiplying the integer weight by -1, and leaving the Lagrangian
subspace on the boundary the same.

A standard basis $\{u_{\sigma}\}$ for $V_{p}(\Sigma)$ is given
(see ~\cite{BHMV3}) in terms of $p$-admissible colorings $\sigma$
of a banded uni-trivalent graph in a handlebody of genus $g$ whose boundary is $\Sigma$.


Let $\mathcal{C^{'}}$ be the subcategory of $\mathcal{C}$
consisting of the nonempty connected surfaces and connected
cobordisms between them.

\begin{defn}
For the surface $\Sigma$, we define $\mathcal{S}_{p}(\Sigma)$ to be the
$\mathcal{O}_{p}$-submodule of $V_{p}(\Sigma)$ generated by all
connected vacuum states.
\end{defn}
If $M :\Sigma \rightarrow \Sigma^{'}$ is a cobordism of
$\mathcal{C}^{'}$, then $Z_{p}(M)([N]_{p}) = [M
\cup_{\Sigma}-N]_{p} \in \mathcal{S}_{p}(\Sigma{'})$. Hence we
obtain a functor from $\mathcal{C}^{'}$ to the category of
$\mathcal{O}_{p}$-modules. These modules are projective as they
are finitely generated and torsion-free over Dedekind domains
~\cite[Theorem.\,2.5]{G04}. Also, these modules carry
 an $\mathcal{O}_{p}$-Hermitian sesquilinear form
\[(\ ,\ )_{\Sigma}:\mathcal{S}_{p}(\Sigma)\times \mathcal{S}_{p}(\Sigma) \rightarrow
\mathcal{O}_{p},\] given by
\begin{equation}\label{e:form2}
 ([M_{1}],[M_{2}])_{\Sigma} =
\mathcal{D}_{p}\langle[M_{1}],[M_{2}]\rangle =
\mathcal{D}_{p}\langle M_{1}\cup_{\Sigma}-M_{2}\rangle_{p},
\end{equation}

The value of this form always lies in $\mathcal{O}_{p}$ by the
integrality results for closed 3-manifolds in ~\cite{GR97,M}.

If $R\subseteq \mathcal{S}_{p}(\Sigma)$ is an $\mathcal{O}_{p}-$submodule
define
\[R^{\sharp}=\{v\in V_{p}(\Sigma)| (r,v)_{\Sigma}\in\mathcal{O}_{p}, \forall r\in R\},\]
then we can conclude
\begin{equation}\label{e:dual}
R\subseteq \mathcal{S}_{p}(\Sigma)\subseteq \mathcal{S}_{p}^{\sharp}
(\Sigma)\subseteq R^{\sharp}.
\end{equation}
\begin{defn}
A Hermitian sesquilinear form on a projective module over a
Dedekind domain is called  $\textit{non-degenerate}$ if the
adjoint map is injective, and $\textit{unimodular}$ if the adjoint
map is an isomorphism.
\end{defn}
For our use, if the matrix of the form has a nonzero (unit)
determinant, then the form will be non-degenerate (unimodular)
respectively.

The elements of the standard basis $u_{\sigma}$ defined in \cite{BHMV3} lie in
$\mathcal{S}_{p}(\Sigma)$ when $p$ is twice an odd prime. This
follows from the fact that the quantum integers (denominators of
the Jones-Wenzl idempotents) are units in $\mathcal{O}_{p}$
(see ~\cite[Corollary.\,6.4]{Q}). An admissible colored uni-trivalent graph
~\cite{BHMV3} is to be interpreted, here and elsewhere, as an
$\mathcal{O}_{p}$-linear combination of links.

A $p$-admissibly uni-trivalent colored graph whose simple closed curves may be cabled with $\omega_{p}$ in a fixed connected 3-manifold $M$ whose
boundary is $\Sigma$ called a \textsl{mixed graph}. Here and elsewhere
\[
[i]_{p} = (A_{p}^{2i} - A_{p}^{-2i})/ (A_{p}^{2} - A_{p}^{-2}),
\]
and
\[
\omega_{p} =  \mathcal{D}_{p}^{-1}\sum_{i=0}^{d_{p} -1}(-1)^{i}[i+1]_{p}  e_{i},
\]
where $d_{p} = [(p-1)/2]$.

Since the surgery axiom (S2) in ~\cite{BHMV3} holds,
we can choose this fixed 3-manifold to be a handlebody whose boundary is $\Sigma$.
Now we can describe the elements of the module $\mathcal{S}_{p}(\Sigma)$ in terms
of the mixed graph notation as follows:
\begin{prop}\label{p:mixed}
A mixed graph
represents an element in $\mathcal{S}_{p}(\Sigma)$. Moreover,
$\mathcal{S}_{p}(\Sigma)$ is generated over $\mathcal{O}_{p}$ by
all the elements given by a mixed graph in a fixed handlebody
whose boundary is $\Sigma$ with the same genus.
\end{prop}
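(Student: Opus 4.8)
The plan is to prove both assertions simultaneously, using the surgery axiom (S2) of \cite{BHMV3} to trade $\omega_p$-colored curves for honest surgery, together with the fact recalled just above the statement, that an admissibly colored uni-trivalent graph is an $\mathcal{O}_p$-linear combination of banded links. The point to keep in mind throughout is that $\omega_p$ has been normalized — this is the role of the factor $\mathcal{D}_p^{-1}$ in its definition — precisely so that, for a framed link $L$ in a connected $3$-manifold $M$, coloring the components of $L$ by $\omega_p$ realizes surgery along $L$: concretely, by (S2) a graph $L_{\omega_p}\cup\Gamma$ placed in $M$ represents the same vector of $V_p(\Sigma)$ as $\Gamma$ placed in the surgered manifold $M_L$, once $M_L$ is given a suitably reassigned integer weight. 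The scalar this introduces is at most a power of the framing anomaly $\kappa$, which is a root of unity and hence a unit of $\mathcal{O}_p$; since $\mathcal{S}_p(\Sigma)$ is a module, this causes no harm.

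For the first assertion I would argue as follows. Let $G=L_{\omega_p}\cup\Gamma$ be a mixed graph in a connected $M$ with $\partial M=\Sigma$, where $L$ is the $\omega_p$-colored sublink and $\Gamma$ is the remaining part, colored by integers. Surgery along $L$ yields a $3$-manifold $M_L$ that is again connected (removing a tubular neighborhood of a link from a connected $3$-manifold and regluing solid tori preserves connectedness) and still has boundary $\Sigma$. By (S2), $G$ represents a unit multiple of the vector obtained from $\Gamma$ sitting in $M_L$. Since $\Gamma$ is admissibly colored, it equals an $\mathcal{O}_p$-linear combination $\sum_j c_j K_j$ with $c_j\in\mathcal{O}_p$ and $K_j\subset M_L$ banded links, so $G$ represents a unit times $\sum_j c_j\,[M_L,K_j]_p$. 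Each pair $(M_L,K_j)$, regarded as a cobordism from $\emptyset$ to $\Sigma$, is connected, so $[M_L,K_j]_p$ is a connected vacuum state and hence lies in $\mathcal{S}_p(\Sigma)$ by definition; as $\mathcal{S}_p(\Sigma)$ is an $\mathcal{O}_p$-submodule and the $c_j$ lie in $\mathcal{O}_p$, the vector represented by $G$ lies in $\mathcal{S}_p(\Sigma)$.

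For the second assertion, fix the genus-$g$ handlebody $H$ with $\partial H=\Sigma$. Since $\mathcal{S}_p(\Sigma)$ is generated over $\mathcal{O}_p$ by the connected vacuum states $[N]_p$, and such an $N$ is a connected weighted $3$-manifold $N_0$ with $\partial N_0=\Sigma$ carrying an admissibly colored banded link $K$, it is enough to present $N_0$ as the result of surgery on a framed link $L$ in the interior of $H$, by a diffeomorphism restricting to the identity on $\Sigma$; this is the relative form of the Lickorish--Wallace theorem. Isotoping $K$ off the surgery solid tori puts it inside $H\setminus N(L)\subset H$; call its image $K'$. Reading (S2) backwards then identifies $[N]_p$ with a unit times the vector represented by the mixed graph $L_{\omega_p}\cup K'$ in $H$. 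Hence every connected vacuum state is a unit multiple of a mixed graph in $H$, so $\mathcal{S}_p(\Sigma)$ is contained in the $\mathcal{O}_p$-submodule generated by mixed graphs in $H$; the reverse inclusion is the first assertion applied with $M=H$, giving equality.

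The main obstacle is the relative Lickorish--Wallace input — that a compact connected oriented $3$-manifold with connected genus-$g$ boundary is surgery on a framed link in the genus-$g$ handlebody. This is classical (pass to a Heegaard splitting relative to the boundary, realize the gluing homeomorphism as a product of Dehn twists, each of which is $\pm1$-framed surgery on a curve pushed into the compression body, and cancel the resulting complementary handle pairs), but it is the one step not internal to the TQFT and deserves an explicit reference. The remaining subtlety is ring-theoretic rather than geometric: an $\omega_p$-colored curve, and the expansion of a Jones--Wenzl colored graph, separately involve $\mathcal{D}_p^{-1}$ and quantum integers that need not lie in $\mathcal{O}_p$, and it is only their combination — surgery, which by (S2) swallows the $\mathcal{D}_p^{-1}$ factors, followed by the $\mathcal{O}_p$-integral expansion of the leftover admissibly colored graph — that is manifestly defined over $\mathcal{O}_p$. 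This is exactly why the proposition is phrased in terms of $\omega_p$-colored curves realizing surgery, rather than as an identity of skein elements.
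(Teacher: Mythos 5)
Your proof is correct and follows the same two-step strategy as the paper's own (two-sentence) proof: the first assertion via the surgery axiom (S2) together with the $\mathcal{O}_{p}$-integral expansion of admissibly colored graphs, and the second via the fact that every $3$-manifold bounding $\Sigma$ is obtained by surgery on a link in the handlebody. Your version simply supplies the details (connectedness of the surgered manifold, unit anomaly factors, the relative Lickorish--Wallace input) that the paper leaves implicit.
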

\begin{proof}
The first statement follows from that fact that $V_{p}$ satisfies
the second surgery axiom. The second statement follows from the fact that
every 3-manifold with boundary $\Sigma$ is obtained by a sequence
of 2-surgeries to a handlebody of the same boundary and the
definition of $\mathcal{S}_{p}(\Sigma)$.
\end{proof}


\section{The quantization functor for $p=2$}
To relate between the $SU(2)$-TQFT and the $SO(3)$-TQFT theories,
we need to discuss the $2^{'}$-theory.

We start by recalling the ring used in this theory and its
ring of integers:
\[
k_{2}=\mathbb{Z}[\alpha_{2},\frac{1}{2}] \quad \text{and}\quad \mathcal{O}_{2} =
\mathbb{Z}[\alpha_{2}].
\]

The surgery element for this 2-theory is $\omega_{2}=
\frac{1}{\sqrt{2}}\Omega_{2}$ where $\Omega_{2} = 1 + \frac{z}{2}$ as defined in \cite[Section.\,6]{BHMV1}.
One has $\mathcal{D}_{2}=\sqrt{2}$, and
$\kappa_{2}=\zeta_{8}$. Therefore, the invariant of a closed
connected 3-manifold $M$ that is obtained by doing surgery on
$S^{3}$ along the link $L$ in the complement of a banded link $K$ is given by
\begin{equation}\label{e:invariant2}
 \langle M, K \rangle^{'}_{2}
= (-2)^{-n}\frac{1}{\sqrt{2}}<L(\omega_{2})\cup K>, \
\end{equation}
where $< L(\omega_{2})\cup K>$ denotes the Kauffman bracket of the disjoint union of the link $L$ cabled with $\omega_{2}$
and $K$, and $ n $ is the number of components of the banded link $K$.
From this formula, we can easily verify that
\begin{equation}\label{e:sum}
 \langle M_{1}\sharp
M_{2}\rangle^{'}_{2} = \sqrt{2}\langle M_{1}\rangle^{'}_{2}\langle
M_{2}\rangle^{'}_{2}.
 \end{equation}
Now this invariant $\langle M\rangle^{'}_{2}$ defined in ~\cite[Section.\,2]{BHMV3} is involutive and extended to be multiplicative, hence
(by ~\cite[Proposition.\,1.1]{BHMV3}) there exits a unique cobordism
generated quantization functor that extends $\langle M\rangle^{'}_{2}$
which is denoted by ($V^{'}_{2},Z^{'}_{2}$). The modules $V^{'}_{2}(\Sigma)$
carry a Hermitian sesquilinear form defined as follows.
\[
\langle \ , \ \rangle_{\Sigma}:V^{'}_{2}(\Sigma)\times
V^{'}_{2}(\Sigma)\rightarrow k_{2},
\]given by
\[
 \langle[M_{1}],[M_{2}]\rangle_{\Sigma} = \langle
M_{1}\cup_{\Sigma}-M_{2}\rangle^{'}_{2} .\]

\begin{rem}
The elements in $V_{2}^{'}(\Sigma)$ can not be represented by elements of the Kauffman
bracket skein module of a handlebody at $A = i$. This because the skein relations do not hold
for elements represented by banded links $K$ in a 3-manifold because of the undetermined way of the number
of components of $K$ change under smoothing. But elements can be represented by formal $k_{2}$-linear combinations of links in a fixed handlebody $H$ whose boundary is $\Sigma$. 
\end{rem}

We let $\omega_{2}^{'}$ denote the surgery element for the $2^{'}$-theory. We have $\omega_{2}^{'} = \frac{1}{\sqrt{2}}(1-z)$.
By \cite[1.5 and 6.3]{BHMV3}, $V^{'}_{2}(S^{1}\times S^{1})$ is generated
by two elements, named $1$ and $z$, each of which is a solid torus where
the core is colored either 0 or 1 respectively.
If we restrict this theory to the category of nonempty connected
objects and connected cobordisms between them, then we have an
integral cobordism theory as before. This follows from the fact
$\sqrt{2}\left\langle \  \right\rangle^{'}_{2}$ is integral as stated in \cite[Lemma.\,6.2]{GR97}.

\begin{defn}
We define $\mathcal{S}^{'}_{2}(\Sigma)$ to be the
$\mathcal{O}_{2}$-submodule of $V^{'}_{2}(\Sigma)$ generated by all
connected vacuum states, and we define an
$\mathcal{O}_{2}$-Hermitian sesquilinear form on
$\mathcal{S}^{'}_{2}(\Sigma)$ given by $(\ , \
)_{\Sigma}=\sqrt{2}\langle \ ,\ \rangle_{\Sigma}$.
\end{defn}

The above basis for $V^{'}_{2}(S^{1}\times S^{1})$ does not generate
$\mathcal{S}^{'}_{2}(S^{1}\times S^{1})$. The following theorem gives
a basis for $\mathcal{S}^{'}_{2}(S^{1}\times S^{1})$. 

\begin{thm}\label{t:genus1}
The set $\mathcal{B} = \{\omega_{2}^{'}, t(\omega_{2}^{'})\}$
is a basis for $\mathcal{S}^{'}_{2}(S^{1}\times S^{1})$, and the form $(\ , \
)_{S^{1}\times S^{1}}$ is
unimodular on $\mathcal{S}^{'}_{2}(S^{1}\times S^{1})$. 
\end{thm}
\begin{proof}
From the definition we know that these two elements lie in $\mathcal{S}^{'}_{2}(S^{1}\times S^{1})$,
hence $\mathcal{W}=\Span_{\mathcal{O}_{2}}\mathcal{B}\subseteq
\mathcal{S}^{'}_{2}(S^{1}\times S^{1})$. The matrix of the form $(\ ,\
)_{S^{1}\times S^{1}}$ in terms of $\mathcal{B}$ is given by
\[
\left(%
\begin{array}{cc}
  \sqrt{2} & \frac{1-i}{\sqrt{2}} \\
  \frac{1+i}{\sqrt{2}} & \sqrt{2} \\
\end{array}%
\right) .\] 

This follows from \cite[Theorem.\,5.2]{Q}, so the form restricted on $\mathcal{W}$ has a unit
determinant. Hence $\mathcal{W} = \mathcal{W}^{\sharp}$. Using the analog in the $2^{'}$-theory of 
equation (\ref{e:dual}), we get that $\mathcal{W}$ is all of
$\mathcal{S}^{'}_{2}(S^{1}\times S^{1})$. In conclusion,
$\{\omega_{2}^{'},t(\omega_{2}^{'})\}$ is a basis for
$\mathcal{S}^{'}_{2}(S^{1}\times S^{1})$.
\end{proof}

\begin{defn}
Let $H_{i_{1}i_{2}\ldots i_{g}}$ stands for the element represented by the handlebody $H$ of genus $g$
after doing cabling along the core of the $j$-th hole with $t^{i_{j}}(\omega_{2}^{'})$.
\end{defn}

\begin{thm}\label{2theory}
The set $\{H_{i_{1}i_{2}\ldots i_{g}} | i_{j} = 0, 1, 1\leq j \leq g \}$ forms a basis for $\mathcal{S}^{'}_{2}(\Sigma_{g})$
\end{thm}
\begin{proof}
Let $(S^{1}\times S^{2})_{ij}$ denote $S^{1}\times S^{2}$ formed
by gluing two solid tori after cabling their cores with
$t^{i}(\omega_{2}^{'})$, and $t^{j}(\omega_{2}^{'})$ where $i,j\in
\{0,1\}$. Also, let $H_{1}, H_{2}$ be any two elements in the above set.
Let us look at the pairing

\begin{align*}
(H_{1}, H_{2})_{\Sigma} & = \sqrt{2}\langle H_{1},H_{2}\rangle_{\Sigma}\\ & =
\sqrt{2}\langle \sharp_{k=1}^{g}(S^{1}\times S^{2})_{i_{k}j_{k}}\rangle_{2}^{'} \\
 &= \sqrt{2}^{g} \prod_{k=1}^{g} \langle
(S^{1}\times S^{2})_{i_{k}j_{k}}\rangle_{2}^{'}\\
&=\prod_{k=1}^{g}(t^{i_{k}}(\omega_{2}^{'}),
t^{j_{k}}(\omega_{2}^{'}))_{S^{1}\times S^{2}}.
\end{align*}
With a natural order, the matrix of the form in terms of this set
is given by $\bigotimes^{g}B$ ($B$ is defined in the proof of the
previous theorem). This implies that the determinant of this form
is a unit in $\mathcal{O}_{2}$. By a similar argument as in the proof of Theorem
\ref{t:genus1}, the module generated by this set is all of
$\mathcal{S}^{'}_{2}(\Sigma_{g})$.
\end{proof}


\section{Relating the $SU(2)$-TQFT and the $SO(3)$-TQFT theories}

For the remainder of the paper, $r$ will denote an odd prime and $p = 2r$. The results of this section are slight variations of the results of
~\cite[Section.\,6]{BHMV3} and ~\cite[Section.\,2]{BHMV2}. The ring $k_{p}$ is
not exactly the same as the ring denoted this way in
~\cite{BHMV3}.

We consider the ring $k_{p}$ as a $k_{2}$ (or a
$k_{r}$)-module via the homomorphisms defined below that are
slight variation of the maps defined in ~\cite[Section.\,2]{BHMV2}.
The reason for this variation is that we use the cobordism category of extended
surfaces and 3-manifolds instead of $p_{1}$-structures
We list the results that we use later
(see ~\cite[Section.\,6]{Q} for proofs and more details).
\begin{lem}\label{l:maps}
There are well-defined ring homomorphisms $i_{r}:k_{2}\rightarrow
k_{p}$, $j_{r}: k_{r}\rightarrow k_{p}$  given by

 \[
i_{r}(\alpha_{2}) = \alpha_{p}^{r^{2}}, \ j_{r}(\alpha_{r}) =
 \alpha_{p}^{1+r^{2}} \text{for}\ r\equiv 1\pmod{4}, \text{and}\] \[
 j_{r}(A_{r})=A_{p}^{1+r^{2}} \text{for} \ r\equiv -1\pmod{4}.
 \]

\end{lem}

The above ring homomorphisms allow one to construct a $k_{p}$-module starting with
any $k_{2}$-module or $k_{r}$-module by tensoring the original module
with $k_{p}$ over $k_{2}$ or $k_{r}$ respectively.
We let ${\widehat{V}}^{'}_{2}(\Sigma)$
or ${\widehat{V}}_{r}(\Sigma)$ be the $k_{p}-$module obtained in
this way. We give a relation between
${\widehat{V}}^{'}_{2}(\Sigma),{\widehat{V}}_{r}(\Sigma),$ and ${V}_{2r}(\Sigma)$ for any
surface $\Sigma$.
\begin{thm}\label{t:tensor}
There is a natural $k_{p}$-isomorphism $F:
{V}_{p}(\Sigma)\rightarrow {\widehat{V}}^{'}_{2}(\Sigma)\otimes_{k_{p}}
{\widehat{V}}_{r}(\Sigma)$ defined by
\begin{equation}\label{e:map}
F([M]_{p}) = [M]^{'}_{2}\otimes_{k_{p}}[M]_{r},
\end{equation}
where $M$ is a 3-manifold with banded link (but not linear
combination of links) sitting inside of it.
\end{thm}

The proof of the above theorem is given in \cite{BHMV3}. Further details can be found in \cite[Section.\,6]{Q}.


\section{Basis for $\mathcal{S}_{p}(S^1\times S^{1})$}

In ~\cite{GMW04}, the authors gave two explicit bases for
$\mathcal{S}_{p}(S^1\times S^{1})$ where $p$ is an odd prime.
To serve our need, we state the first basis.

\begin{thm}
The set $ \{ 1, v, \ldots, v^{d_{r}-1}\}$ forms a basis for
$\mathcal{S}_{r}(S^{1}\times S^{1})$, where $v=\frac{z+2}{1+\zeta_{p}}$.
\end{thm}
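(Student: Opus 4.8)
The plan is to follow the same template used in the proofs of Theorem~\ref{t:genus1} and Theorem~\ref{2theory}: exhibit an explicit spanning set inside $\mathcal{S}_{p}(S^{1}\times S^{1})$, compute the Gram matrix of the form $(\ ,\ )_{S^{1}\times S^{1}}$ on it, check that its determinant is a unit in $\mathcal{O}_{p}$, and then invoke the chain of inclusions \eqref{e:dual} to conclude that this candidate submodule is self-dual and hence all of $\mathcal{S}_{p}(S^{1}\times S^{1})$. Concretely, first I would observe that $v = \frac{z+2}{1+A_{p}}$ is a genuine element of $\mathcal{S}_{p}(S^{1}\times S^{1})$: it is an $\mathcal{O}_{p}$-linear combination of $1$ and $z$ (the colors $0$ and $1$ of the core of the solid torus), after checking that $1+A_{p}$ divides $z+2$ in the relevant skein module, equivalently that the quantum-integer denominators appearing when one re-expands in the $\omega_{p}$-coloring are available in $\mathcal{O}_{p}$; more to the point, $v$ and its powers can be written as mixed graphs in the solid torus (loops colored by suitable combinations built from $\omega_p$), so Proposition~\ref{p:mixed} already guarantees $1, v, \ldots, v^{d_p-1} \in \mathcal{S}_{p}(S^{1}\times S^{1})$. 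Let $\mathcal{W} = \Span_{\mathcal{O}_{p}}\{1, v, \ldots, v^{d_p-1}\} \subseteq \mathcal{S}_{p}(S^{1}\times S^{1})$.

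Next I would compute the pairings $(v^{a}, v^{b})_{S^{1}\times S^{1}}$. Since $(x,y)_{S^{1}\times S^{1}} = \mathcal{D}_{p}\langle x\cup_{T^2} -y\rangle_p$ is, up to the factor $\mathcal{D}_p$, the Kauffman-bracket invariant of $S^1\times S^2$ with a link colored by the product of the two colorings in the two solid tori, everything reduces to the standard formula for $\langle S^1\times S^2, e_i \sqcup e_j\rangle$ being proportional to $\delta_{ij}$ against the $e_i$ basis (the Jones–Wenzl idempotents), with the proportionality constants the inverses of the $e_i$-norms. Thus I would change basis from $\{v^a\}$ to the idempotent basis $\{e_0, \ldots, e_{d_p-1}\}$ of $V_p(S^1\times S^1)$: the $e_i$ diagonalize the form, so the Gram matrix of $\{v^a\}$ is $C^{t}\,\mathrm{diag}(\mu_0,\ldots,\mu_{d_p-1})\,\bar{C}$ where $C$ is the (triangular, with unit-ish diagonal entries coming from powers of $(1+A_p)^{-1}$) change-of-basis matrix and $\mu_i$ are the diagonal pairings. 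Its determinant is therefore $\det(C)^2$ (up to conjugation) times $\prod_i \mu_i$, and I would argue each factor is a unit in $\mathcal{O}_p$: the $\mu_i$ are units because the relevant quantum integers are, and $\det C$ is a unit power of $(1+A_p)^{-1}$ — here one uses that $1+A_p$ is a unit in $\mathcal{O}_p$ when $p$ is an odd prime (a standard cyclotomic fact, since $1+\zeta_{2p}$ divides $2$ which is a unit times a power of the prime above $2$... more carefully, $1 + \zeta_{2p}$ is a unit in $\mathbb{Z}[\zeta_{2p}]$ for $p$ an odd prime because $\Phi_{2p}(-1) = \pm 1$).

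Finally, with $\det$ of the form on $\mathcal{W}$ a unit, the form restricted to $\mathcal{W}$ is unimodular, so $\mathcal{W} = \mathcal{W}^{\sharp}$; then \eqref{e:dual} applied with $R = \mathcal{W}$ gives $\mathcal{W} \subseteq \mathcal{S}_p(S^1\times S^1) \subseteq \mathcal{S}_p^{\sharp}(S^1\times S^1) \subseteq \mathcal{W}^{\sharp} = \mathcal{W}$, forcing equality, and since $\mathcal{W}$ is free of rank $d_p$ on the stated set, that set is a basis. The main obstacle I anticipate is not the self-duality bootstrap — that is purely formal once the determinant is controlled — but rather the explicit arithmetic: verifying cleanly that $v^a$ lies in the lattice (i.e.\ that the powers of $(1+A_p)$ in the denominators really are cancelled, or equivalently identifying $v^a$ as a legitimate mixed graph), and then that $1+A_p$ and the pertinent quantum integers $[k]_p$ are units in $\mathcal{O}_p = \mathbb{Z}[A_p]$ for $p$ an odd prime. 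These are known (and presumably cited to \cite{GMW04} or \cite{Q}), so in practice I would quote them; the genuinely new content here is just the reformulation and the pointer to how this genus-one basis feeds into the later sections.
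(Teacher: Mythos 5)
First, a point of reference: the paper gives no proof of this statement at all --- it is quoted from \cite{GMW04} --- so the only honest comparison is with the proof there, which does follow the self-duality template you describe (the same one the paper reuses for Theorem~\ref{t:genus1}): put the candidate set inside the lattice, show the Gram determinant of $(\ ,\ )_{S^{1}\times S^{1}}$ is a unit, and squeeze with \eqref{e:dual}. Your skeleton is therefore the right one.

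However, your execution rests on a false arithmetic claim that guts both substantive steps. You assert that $1+A_{p}$ is a unit in $\mathcal{O}_{p}$ because $\Phi_{2p}(-1)=\pm1$. In fact $\Phi_{2p}(x)=\Phi_{p}(-x)$ for odd $p$, so $\Phi_{2p}(-1)=\Phi_{p}(1)=p$: the norm of $1+A_{p}=1-\zeta_{p}^{(p+1)/2}$ is $\pm p$ (a power of $p$ after the coefficient extension when $p\equiv 1 \pmod 4$), and $1+A_{p}$ generates the prime above $p$. If it were a unit, $v$ would be a unit multiple of $z+2$, the theorem would be content-free, and the whole machinery of the later sections --- divisibility of the eyeglass and tripod by $h=1+\zeta_{p}$, the floor-versus-ceiling distinction between $\mathcal{S}_{p}$ and $\mathcal{S}_{p}^{\sharp}$ --- would collapse. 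This breaks your argument twice over. (i) Membership: that $(z+2)^{j}$ is divisible by $(1+A_{p})^{j}$ inside the lattice is precisely the nontrivial content of the theorem; Proposition~\ref{p:mixed} does not hand it to you, and ``$v^{j}$ can be written as a mixed graph'' is exactly what must be proved (in \cite{GMW04} this is done by expressing the $v^{j}$ against the $t^{i}(\omega_{p})$). (ii) The determinant: neither factor in your factorization is a unit. The diagonal pairings are $(e_{i},e_{i})_{S^{1}\times S^{1}}=\mathcal{D}_{p}$, whose $(1-\zeta_{p})$-adic valuation is $d_{p}-1$, so $\prod_{i}\mu_{i}$ has valuation $d_{p}(d_{p}-1)$; the triangular change of basis contributes $|\det C|^{2}$ of valuation $-d_{p}(d_{p}-1)$. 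The Gram determinant is a unit only because these two non-unit contributions cancel exactly --- not because ``each factor is a unit.'' As written, the proposal establishes neither of the two facts that actually need proving.
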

We recall some results from ~\cite{Q} that will be used later.

\noindent
\begin{bf}{Notation.}
\end{bf} We use the notation from \cite{Q}: $\mathcal{\widehat{S}}_{i}(\Sigma) = \mathcal{S}_{i}(\Sigma)\otimes_{k_{i}}k_{p}$ for $i = 2$ or $r$.

\begin{thm}
The set $  \{ t^{i+ \delta_{i} p}(\omega_{p}) \ | \
0\leq i \leq d_{p}-1 \}$ forms a basis for
$\mathcal{S}_{p}(S^{1}\times S^{1})$,
where
\[
 \delta_{i} =
  \begin{cases}
  0,
     & \text{if $i + p \equiv \quad 2 \quad or \quad 3  \pmod
     {4}$};
     \\
    1, & \text{if $i + p \equiv \quad 0 \quad or \quad 1  \pmod
    {4}$}.
  \end{cases}
\]
\end{thm}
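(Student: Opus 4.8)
The plan is to imitate the proof of Theorem~\ref{t:genus1}. First, $\mathcal{B}'=\{t^{i+\delta_{i}p}(\omega_{p})\mid 0\le i\le d_{p}-1\}$ is contained in $\mathcal{S}_{p}(S^{1}\times S^{1})$: the element $t^{i+\delta_{i}p}(\omega_{p})$ is the class of a single framed $\omega_{p}$-colored simple closed curve in a genus-one handlebody with boundary $S^{1}\times S^{1}$ --- that is, of a mixed graph --- so it lies in $\mathcal{S}_{p}(S^{1}\times S^{1})$ by Proposition~\ref{p:mixed}. Set $\mathcal{W}=\Span_{\mathcal{O}_{p}}\mathcal{B}'\subseteq \mathcal{S}_{p}(S^{1}\times S^{1})$. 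It now suffices to show that the matrix $G$ of $(\ ,\ )_{S^{1}\times S^{1}}$ in terms of $\mathcal{B}'$ has determinant a unit of $\mathcal{O}_{p}$: then $G$ is nonsingular, so $\mathcal{B}'$ is a basis of the free module $\mathcal{W}$ and the form is unimodular on $\mathcal{W}$, hence $\mathcal{W}=\mathcal{W}^{\sharp}$, and then (\ref{e:dual}) yields $\mathcal{W}\subseteq \mathcal{S}_{p}(S^{1}\times S^{1})\subseteq \mathcal{S}_{p}^{\sharp}(S^{1}\times S^{1})\subseteq \mathcal{W}^{\sharp}=\mathcal{W}$, so all are equal and $\mathcal{B}'$ is a basis of $\mathcal{S}_{p}(S^{1}\times S^{1})$.

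To compute $G$, one observes that $g_{a,b}:=(t^{a}(\omega_{p}),t^{b}(\omega_{p}))_{S^{1}\times S^{1}}$ depends only on $a-b$. One way to see this: $t$ acts on $V_{p}(S^{1}\times S^{1})$ by $Z_{p}$ of a Dehn twist, which is diagonalizable with root-of-unity twist eigenvalues $\mu_{k}$ on an orthogonal eigenbasis $\{u_{k}\}$, so writing $\omega_{p}=\sum_{k}c_{k}u_{k}$ gives $g_{a,b}=\sum_{k}|c_{k}|^{2}(u_{k},u_{k})\,\mu_{k}^{a-b}=:h(a-b)$. Alternatively, $g_{a,b}$ is the evaluation of a two-component $\omega_{p}$-colored link with relative framing $a-b$, for which a closed formula can be read off from \cite{GMW04}. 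Either way, $G=\bigl(h\bigl((i-j)+(\delta_{i}-\delta_{j})p\bigr)\bigr)_{0\le i,j\le d_{p}-1}$.

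The crux --- and the step I expect to be the main obstacle --- is to prove $\det G\in\mathcal{O}_{p}^{\times}$; this is exactly what the arithmetic definition of $\delta_{i}$ is engineered for. Here it is essential that $p=2q$ with $q$ an odd prime, so that the powers $\mu_{k}^{p}$ of the twist eigenvalues are roots of unity of order dividing $4$ (in fact $\pm 1$), whence the replacement $i\mapsto i+\delta_{i}p$ merely multiplies the $u_{k}$-contribution to $G$ by the unit $(\mu_{k}^{p})^{\delta_{i}}$. Conceptually, this reflects that on the torus the $p$-theory is, after an appropriate change of coefficients (cf.\ \cite{BHMV3}), the tensor product of the $2$-theory and the $SO(3)$-$q$-theory; the congruence defining $\delta_{i}$ is the one that synchronizes the two factors, so that $\mathcal{B}'$ becomes a unimodular transform of the tensor product of the genus-one basis $\{\omega_{2},t(\omega_{2})\}$ of Theorem~\ref{t:genus1} with one of the odd-prime bases $\{t^{2j}(\omega_{q})\}$, $\{t^{2j+1}(\omega_{q})\}$ recalled above (with $q$ in the role of the odd prime). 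Carrying this out --- either by a Vandermonde-type evaluation of $\det G$ in the $\mu_{k}$, or by block-diagonalizing $G$ along the $2\otimes q$ splitting --- reduces the computation to the two genus-one Gram determinants, which are units (by Theorem~\ref{t:genus1} and because the odd-prime sets above are bases), up to a leftover product of quantum integers and a Gauss sum; the quantum integers are units in $\mathcal{O}_{p}$ by \cite[Corollary.\,6.4]{Q} and the Gauss sum is a unit, so $\det G$ is a unit and the argument closes as noted above. The delicate part is the bookkeeping showing that this particular piecewise $\delta_{i}$, and not a nearby choice, is the one that makes every power of $2$ and of $\mathcal{D}_{p}$ cancel.
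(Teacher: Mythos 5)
You have the right frame: put $\mathcal{W}=\Span_{\mathcal{O}_{p}}\mathcal{B}'$, check $\mathcal{W}\subseteq\mathcal{S}_{p}(S^{1}\times S^{1})$ via Proposition~\ref{p:mixed}, and then squeeze with the chain (\ref{e:dual}) once the Gram determinant is known to be a unit. This is exactly the pattern of Theorem~\ref{t:genus1}, and note that the paper itself does not reprove the statement --- it imports it from \cite{Q}, where (as Corollary~\ref{isomophism} makes clear) the actual argument runs through the isomorphism $F$ of Theorem~\ref{t:tensor} and the identification of $\mathcal{S}_{p}(S^{1}\times S^{1})$ with $\mathcal{S}^{'}_{2}(S^{1}\times S^{1})\otimes\mathcal{S}_{r}(S^{1}\times S^{1})$, i.e.\ the second of the two routes you gesture at.

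The problem is that you never execute the one step that carries all the content. You yourself flag ``prove $\det G\in\mathcal{O}_{p}^{\times}$'' as the crux and then offer only a plan (``either by a Vandermonde-type evaluation \ldots or by block-diagonalizing \ldots''), ending with the admission that the ``delicate bookkeeping'' justifying this particular $\delta_{i}$ is not done. Worse, the one concrete observation you do make in that direction is not sound as stated: from $\mu_{k}^{p}=(-1)^{k}$ you conclude that replacing $i$ by $i+\delta_{i}p$ ``merely multiplies the $u_{k}$-contribution to $G$ by the unit $(\mu_{k}^{p})^{\delta_{i}}$,'' but that factor is $(-1)^{k(\delta_{i}-\delta_{j})}$ and depends on $k$, so it does not pull out of the entry $G_{ij}=\sum_{k}|c_{k}|^{2}(u_{k},u_{k})\mu_{k}^{(i-j)+(\delta_{i}-\delta_{j})p}$, let alone out of $\det G$. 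If the shift really only changed $G$ by units in this way, the set $\{t^{i}(\omega_{p})\}_{0\le i\le d_{p}-1}$ with all $\delta_{i}=0$ would already be a basis and the congruence condition on $\delta_{i}$ would be pointless; the whole theorem lives in showing that precisely this $\delta_{i}$ (which sorts the exponents $i+\delta_{i}p$ into the residues $0,1\bmod 4$ matching $\{\omega_{2},t(\omega_{2})\}$ on the $2$-factor and $\{t^{2j}(\omega_{r})\}$, $\{t^{2j+1}(\omega_{r})\}$ on the $r$-factor under $F$) produces a unit determinant. Until that identification, or an equivalent direct evaluation of $\det G$, is actually carried out, the argument is a correct strategy with its central verification missing.
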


As a corollary of the proof of the above theorem, we obtain:
\begin{cor}\label{isomophism}
The  isomorphism $F$ in Theorem \ref{t:tensor} induces an isomorphism between
$\mathcal{S}_{p}(S^{1}\times
S^{1})$ and $ {\mathcal{\widehat{S}}}^{'}_{2}(S^{1}\times S^{1}) \otimes_{k_{p}}
{\mathcal{\widehat{S}}}_{r}(S^{1}\times S^{1})$.
\end{cor}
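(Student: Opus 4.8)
The plan is to exploit the decomposition $p = 2r$ and the fact that the TQFT at a $4p$-th root of unity factors, up to a change of coefficients, into the $p=2$ theory and the $SO(3)$ theory at $r$. Concretely, the ring $k_p = \mathbb{Z}[\alpha_p, \frac{1}{p}]$ contains both $\alpha_2$ (a primitive $8$-th root of unity) and $A_r$ (a primitive $2r$-th root of unity), since $\alpha_p = \zeta_{4p} = \zeta_{8r}$ and $8r = \mathrm{lcm}(8, 2r)$ when $r$ is an odd prime; this is precisely what makes the tensor-product isomorphism $F$ of Theorem~\ref{t:tensor} possible over $k_p$. I would first recall from the cited proof of the preceding theorem (the basis $\{t^{i+\delta_i p}(\omega_p)\}$ for $\mathcal{S}_p(S^1\times S^1)$) that the construction there already shows $F$ carries the standard generators of $\mathcal{S}_p(S^1\times S^1)$ into $\mathcal{S}'_2(S^1\times S^1)\otimes \mathcal{S}_r(S^1\times S^1)$; the point of the corollary is to upgrade this to a genuine isomorphism of lattices, not merely a containment.

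The key steps, in order, are as follows. First, I would show $F\bigl(\mathcal{S}_p(S^1\times S^1)\bigr) \subseteq \mathcal{S}'_2(S^1\times S^1)\otimes_{k_p}\mathcal{S}_r(S^1\times S^1)$ by tracking what $F$ does to the surgery element $\omega_p$ and its images $t^i(\omega_p)$: under the factorization of the TQFT, $\omega_p$ maps to (a unit multiple of) $\omega_2 \otimes \omega_r$, and more generally $t^i(\omega_p)$ maps to a $k_p$-combination of $t^a(\omega_2)\otimes t^b(\omega_r)$; since $\{\omega_2, t(\omega_2)\}$ is a basis for $\mathcal{S}'_2(S^1\times S^1)$ by Theorem~\ref{t:genus1} and $\{t^b(\omega_r)\}_{b=0}^{d_r-1}$ is a basis for $\mathcal{S}_r(S^1\times S^1)$ by the first quoted theorem of Section~3, these images genuinely lie in the tensor product of lattices. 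Second, I would compare ranks/cardinalities: $\mathcal{S}_p(S^1\times S^1)$ has rank $d_p = r-1$ over $\mathcal{O}_p$, while $\mathcal{S}'_2$ has rank $2$ and $\mathcal{S}_r$ has rank $d_r = (r-1)/2$, so the tensor product has rank $2\cdot\frac{r-1}{2} = r-1$ as well. Third, and this is where the real content lies, I would argue that $F$ actually surjects onto the tensor-product lattice. The cleanest route is a determinant computation: the Hermitian form on $\mathcal{S}_p(S^1\times S^1)$ is intertwined by $F$ with the tensor product of the forms on $\mathcal{S}'_2$ and $\mathcal{S}_r$ (each multiplied by the appropriate power of $\mathcal{D}$), so the Gram matrix of the image basis is a tensor product of Gram matrices whose determinants are known; if that determinant is a unit times the determinant of the form on the full tensor lattice, then the image and the whole lattice coincide by the standard $R \subseteq \mathcal{S} \subseteq \mathcal{S}^\sharp \subseteq R^\sharp$ sandwich from equation~(\ref{e:dual}).

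The main obstacle I anticipate is the coefficient bookkeeping around $\mathcal{D}_p$ versus $\mathcal{D}_2$ and $\mathcal{D}_r$, and the corresponding normalization constants $\kappa$: the identity $\langle M\rangle_p \doteq \langle M\rangle'_2\cdot\langle M\rangle_r$ (in the $SO(3)$ sense) holds only up to explicit units and powers of $\sqrt{2}$ and $\mathcal{D}_r$, and one must check that these unit factors do not spoil the lattice-level statement — i.e., that the relevant scalars are units in $\mathcal{O}_p = \mathbb{Z}[\alpha_p]$, not merely in $k_p = \mathbb{Z}[\alpha_p,\tfrac{1}{p}]$. Here I would lean on the reformulation of the $BHMV$ relation between the $SO(3)$ and $SU(2)$ theories promised for Section~4, together with the fact (already noted in Section~1, citing \cite[Corollary~6.4]{Q}) that the quantum integers are units in $\mathcal{O}_p$ when $p$ is twice an odd prime. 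A secondary subtlety is making sure $F$ is defined over the right ring so that ``$\otimes_{k_p}$'' in the statement can be replaced implicitly by $\otimes_{\mathcal{O}_p}$ on the lattice level; but since the basis elements $t^i(\omega_p)$ are shown in the previous proof to map exactly into the $\mathcal{O}_p$-span of the tensor basis, this reduces to the determinant comparison above and should cause no essential difficulty.
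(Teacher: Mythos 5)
Your proposal is correct and is essentially the argument the paper intends: the paper gives no separate proof here, deferring to the proof of the preceding basis theorem from \cite{Q}, which rests on exactly the ingredients you list --- the containment $F(\mathcal{S}_{p}(S^{1}\times S^{1}))\subseteq \mathcal{S}^{'}_{2}(S^{1}\times S^{1})\otimes_{k_{p}}\mathcal{S}_{r}(S^{1}\times S^{1})$ coming from $F([M]_{p})=[M]^{'}_{2}\otimes[M]_{r}$, the rank match $d_{p}=r-1=2d_{r}$, the form-intertwining of Theorem (\ref{t:invariant}) and Lemma (\ref{l:kappa}), and the unit Gram determinant (Theorem (\ref{t:genus1}) together with the unimodularity of the genus-one form in \cite{GMW04}) fed into the duality sandwich of equation (\ref{e:dual}). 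The only cosmetic imprecision is in your third step: once the Gram determinant of the image lattice is a unit in $\mathcal{O}_{p}$ and the form is $\mathcal{O}_{p}$-valued on the ambient tensor lattice, the sandwich already forces equality, with no need to compare against the determinant of the full tensor lattice.
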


Therefore, we can define elements $u_{ij}$ so that $F(u_{ij}) = t^{i}(\omega_{2}^{'})\otimes v^{j}
$ for $ 0 \leq j \leq d_{r}-1$ and $i = 0, 1$.
Using the proof of \cite[Theorem.\,7.6]{Q}, one sees that
\[
u_{0j} = \sum_{\stackrel{i = 0}{{i+\delta_{i}p \equiv 0 \pmod{4}}}}^{d_{p} - 1}k_{i}t^{{i+\delta_{i}p}}(\omega_{p}),
\]
and
\[
u_{1j} = \sum_{\stackrel{i = 0}{{i+\delta_{i}p \equiv 1 \pmod{4}}}}^{d_{p} - 1}k^{'}_{i}t^{{i+\delta_{i}p}}(\omega_{p}),
\]
for $k_{i}, k^{'}_{i} \in \mathcal{O}_{p}$.

Hence the set $\{u_{ij} \ | \ 0 \leq j \leq d_{r}-1, i = 0, 1\}$ forms a basis for $\mathcal{S}_{p}(S^{1}\times S^{1})$.
Also, there is a special element $u$ in $\mathcal{S}_{p}(S^{1}\times S^{1})$
that maps to $1\otimes v \in {\mathcal{\widehat{S}}}^{'}_{2}(S^{1}\times S^{1}) \otimes_{k_{p}}
{\mathcal{\widehat{S}}}_{r}(S^{1}\times S^{1})$. The following lemma gives the explicit formula for $u$.

\begin{lem}
$u = \frac{e_{r-3} + 2}{1+\zeta_{p}}$.
\end{lem}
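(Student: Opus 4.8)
The plan is to identify the element $u$ concretely inside $\mathcal{S}_p(S^1\times S^1)$ by transporting the known description of $v\in\mathcal{S}_r(S^1\times S^1)$ through the isomorphism $F$ of Corollary~\ref{isomophism}. Recall that $v=\frac{z+2}{1+A_r}$ in the $r$-theory, so $1\otimes v$ corresponds to $\frac{1}{1+A_r}(1\otimes(z+2))$ in the tensor product, and we must write $1\otimes(z+2)$ as (the image of) an explicit linear combination of the $\omega_p$-colored generators, then divide. The natural candidate to check is the claimed formula $u=\frac{e_{r-3}+2}{1+\zeta_p}$, where $e_{r-3}$ is the Jones--Wenzl idempotent of index $r-3$ (equivalently the $(r-3)$-colored core of the solid torus) viewed as an element of the skein module of the solid torus, and $\zeta_p=\alpha_p$ is our chosen primitive $2p$-th or $4p$-th root of unity.

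First I would record how $F$ acts on the relevant small-index colorings. Using the surgery/change-of-basis computations already invoked in the proof of the theorem on the basis $\{t^{i+\delta_i p}(\omega_p)\}$, one knows the images $F(z+2)$, $F(\omega_p)$, etc., and in particular how the color $e_{r-3}$ on the core of the solid torus decomposes under the $SO(3)\times\{p=2\}$ splitting; the key point is that $e_{r-3}$ is precisely the color whose image under $F$ picks out the factor $1$ in the $2$-theory slot and $z+2$ (up to the normalization built into $v$) in the $r$-theory slot. Concretely, $r-3$ is even, so in the $2$-theory the $(r-3)$-colored core reduces to the color $0$, i.e. the element $1$; and modulo the $r$-theory one has $e_{r-3}\equiv z+2$ after the appropriate identification (this is the genus-one instance of the relation between $SU(2)$- and $SO(3)$-colorings reformulated in Section~4). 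Hence $F(e_{r-3}+2)=1\otimes(z+2)$ up to exactly the scalar $1+\zeta_p$ that must be cancelled, and dividing gives $F(u)=1\otimes v$.

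The main obstacle is the bookkeeping for the normalizing scalar: one must verify that $e_{r-3}+2$ is genuinely divisible by $1+\zeta_p$ \emph{in the integral lattice} $\mathcal{S}_p(S^1\times S^1)$, not merely in $V_p(S^1\times S^1)$, and that the quotient equals $u$ rather than differing by a unit. This is handled exactly as the divisibility of $u^{ij}$ by $(1+\zeta_p)^j$ was handled earlier in this section: divisibility of the image $1\otimes v=\frac{1}{1+A_r}(1\otimes(z+2))$ in the tensor product ${\mathcal{S}}^{'}_{2}(S^1\times S^1)\otimes_{k_p}{\mathcal{S}}_r(S^1\times S^1)$ pulls back along the isomorphism $F$ to divisibility in $\mathcal{S}_p(S^1\times S^1)$. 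Since $1+\zeta_p$ is a non-zero-divisor, the quotient is unique, and because $F(u)=1\otimes v$ by construction of $u$, the quotient must coincide with $u$.

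Finally I would check the small-index relations $e_{r-3}+2\mapsto 1\otimes(z+2)$ directly in the skein module of the solid torus, using the standard change-of-basis between the $\{e_i\}$ basis and the $\{z^i\}$ basis together with the explicit $p=2$ reduction $z\mapsto z$, $e_i\mapsto$ (its value mod the $2$-theory Jones--Wenzl relation), so that the claimed identity is reduced to an identity of polynomials in $z$ over $\mathcal{O}_p$. This last verification is routine once the two reductions are written down, so the substance of the argument is the transport-of-divisibility step in the paragraph above.
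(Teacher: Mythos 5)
Your overall strategy coincides with the paper's: since $F$ is an isomorphism, $u$ is the unique preimage of $1\otimes v$, so it suffices to compute $F\bigl(\frac{e_{r-3}+2}{1+\zeta_{p}}\bigr)$ and check that it equals $1\otimes\frac{z+2}{1+\zeta_{p}}=1\otimes v$. The entire content of that computation is the single identity $F(e_{r-3})=1\otimes z$: because $r-3$ is even, $e_{r-3}$ is an even polynomial in $z$ and each monomial $z^{2k}$ has $2$-theory slot equal to $1$ (this is the role of the relation $z^{2k}=1$ in the $2$-theory), so $F(e_{r-3})=1\otimes e_{r-3}$, and in the $SO(3)$ theory a loop colored $r-3$ fuses to a loop colored $1$, i.e.\ $e_{r-3}=z$ there. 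Adding the constant $2$ by hand then gives $F(e_{r-3}+2)=1\otimes z+1\otimes 2=1\otimes(z+2)$.

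This is exactly the step your write-up gets wrong. You assert, twice, that ``modulo the $r$-theory one has $e_{r-3}\equiv z+2$.'' The correct reduction is $e_{r-3}\equiv z$, not $z+2$. Taken literally, your claim yields $F(e_{r-3}+2)=1\otimes(z+4)$, which is not $1\otimes(z+2)$, so the lemma would fail; alternatively it would prove $u=\frac{e_{r-3}}{1+\zeta_{p}}$ with no $+2$, contradicting the statement. The fact that you nonetheless write the correct value $F(e_{r-3}+2)=1\otimes(z+2)$ two lines later means the argument is internally inconsistent at precisely the point that carries all the content: the $+2$ must be supplied on the $SU(2)$ side (contributing $F(2)=1\otimes 2$), it is not already contained in the image of $e_{r-3}$. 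Your remaining remarks --- that divisibility by $1+\zeta_{p}$ transports across the isomorphism $F$ and that the quotient is unique because $1+\zeta_{p}$ is a non-zero-divisor --- are correct and in fact slightly more careful than the paper, which takes the integrality of $\frac{e_{r-3}+2}{1+\zeta_{p}}$ for granted; but they do not repair the misidentification of $F(e_{r-3})$.
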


\begin{proof}
From the above isomorphism, there is only one element that maps to $1\otimes v$. Now it is enough to prove that
$F(\frac{e_{r-3} + 2}{1+\zeta_{p}}) = 1\otimes v.$
\begin{align*}
F(\frac{e_{r-3} + 2}{1+\zeta_{p}})& = \frac{1}{1 + \zeta_{p}}(F(e_{r-3}) + F(2))\\
& = \frac{1}{1 + \zeta_{p}}(1\otimes e_{r-3} + 1\otimes 2)\\
& = \frac{1}{1 + \zeta_{p}}(1\otimes z + 1\otimes 2)\\
& = 1\otimes \frac{z + 2}{1 + \zeta_{p}}\\
& = 1\otimes v.
\end{align*}
where the second equality 
follows from the fact $z^{2k} = 1$ for any integer $k$ in the 2$^{'}$-theory and
the third equality follows from the fact $e_{r-3} = e_{1} = z$ in the $SO(3)$-TQFT-theory.

\end{proof}


\section{Basis for $\mathcal{S}_{p}(\Sigma)$}

We are ready now to state our main theorem that gives a basis for $\mathcal{S}_{p}(\Sigma)$.
The basis is given in terms of a $r$-admissible colorings of a fixed lollipop tree
that was first introduced in ~\cite[Section.\,3]{GM04}.

Let $H_{g}$ be a fixed handlebody whose boundary is $\Sigma$ without colored points.
Also, let $G$ be any uni-trivalent banded graph having the same homotopy
type as the handlebody $H_{g}$.
We recall the definition of a \textsl{lollipop tree}.
\begin{defn}\cite{GM04}
Let $G$ be as above.
Then $G$ is a \textsl{lollipop tree} if $G$ has exactly $g$ loop edges, and the complement of the loop edges in $G$ is a tree and denoted by $T$.

\end{defn}
We describe a special coloring of the above lollipop tree $G$. 
We call an edge that is incident
to a loop edge a \textsl{stick edge}. An edge which is neither a loop nor a stick edge is called \textsl{ordinary}.
The colors of the tree $T$ are denoted by $2a_{1}, 2a_{2}, \ldots, 2a_{g}$ for the
stick edges, and $2c_{1}, 2c_{2}, \ldots $ for the ordinary edges. Here
\[
 0 \leq a_{i}, c_{j} \leq d_{r}-1,
\]

Now we color the loop edges by $a_{i} + b_{i}$ for the loop edge incident to the stick
edge colored $2a_{i}$, where
\[
0 \leq b_{i} \leq d_{r} - 1 - a_{i}.
\]
The case $g = 2, s = 0$ is a special case as we count the stick edge twice
with $a_{1} = a_{2}$.

 Coloring $G$ by $(a, b, c)$,  where $ a = (a_{1}, a_{2}, \ldots, a_{g})$,
$b = (b_{1}, b_{2}, \ldots, b_{g}),$ and $c  = (c_{1}, c_{2}, \ldots)$ such that the above inequalities hold
 and such that the pair $(2a, 2c)$ is an
$r$-admissible coloring of $T$, gives
an element $\textsl{g}(a, b, c)$ in $V_{r}(\Sigma)$. Each such vector
represents a linear combination over $\mathcal{O}_{r}$ of skein elements in
the handlebody $H_{g}$ by replacing the edges of $G$ with the appropriate
Jones-Wenzl idempotents.
\begin{rem}
This procedure is first described by the authors of \cite[Section.\,6]{GM04}. Also, they called
the special element  $\textsl{\textsl{g}}((1, 1), (0, 0))\in \mathcal{S}_{r}(\Sigma_{2})$ an eyeglass.
\end{rem}

Now we want to define $\hat{\textsl{g}}(a, 0, c)$ as an element in $\mathcal{S}_{p}(\Sigma)$ by the following procedure
which may involve choices:
\begin{enumerate}
 \item We start with $\textsl{g}(a, 0, c)$ constructed by the above procedure over $\mathcal{O}_{r}$.
  \item Write $\textsl{g}(a, 0, c)$ as a linear combination over $\mathcal{O}_{r}$ of terms which are colored graphs which
  contain a collection of $\lfloor \frac{1}{2}\sum_{i}a_{i}\rfloor$ eyeglasses  (using the process described in the proof of \cite[Proposition.\,6.2]{GM04}).
  \item Rewrite these eyeglasses as linear combination of links obtained by rewriting each loop as $e_{r-3}$.
  \item Extend the coefficients to $\mathcal{O}_{p}$ by applying $j_{r}$.
\end{enumerate}
\textit{The basis for} $\mathcal{S}_{p}(\Sigma)$ will be denoted by $\mathcal{B}$.
It consists of elements indexed by $(a, b, c, m)$ denoted by $\textbf{\textsl{b}}(a, b, c, m)$,
where $m$ is a $g$-tuple of zeros and ones.
They are defined as follows:

\begin{equation}
\textbf{\textsl{b}}(a, b, c, m) = (1 + \zeta_{p})^{ - \left\lfloor \frac{1}{2}\sum_{i}a_{i}\right\rfloor}u_{m_{1}b_{1}}^{\{1\}}\ldots u_{m_{g}b_{g}}^{\{g\}}\hat{\textsl{g}}(a, 0, c),
\end{equation}
where from now on $w^{\{i\}}$ denotes $w$ that encloses the $i$-th hole.

The above multiplication is defined
since the Kauffman bracket skein module is a module over the absolute skein module
by stacking elements on top of each other.
\begin{thm}
If $\Sigma$ is a surface with no colored points, and $p$ twice an odd prime, then  $\mathcal{S}_{p}(\Sigma)$ is free.
\end{thm}

The rest of this section will be devoted to prove this result. The proof goes in two steps:
first by showing that the elements of $\mathcal{B}$ lie in $\mathcal{S}_{p}(\Sigma)$ and then by
showing that they generate it. Hence we conclude that
$\mathcal{S}_{p}(\Sigma)$ is a free lattice.

\begin{lem}
The eyeglass $\hat{\textsl{g}}((1, 1), (0, 0))\in \mathcal{S}_{r}(\Sigma_{2})$ is divisible by $1 + \zeta_{p}$ in $\mathcal{S}_{p}(\Sigma)$.
\end{lem}
\begin{proof}
We let $u^{\{12\}}$ and $e_{r-3}^{\{12\}}$ denote $u$ and $e_{r-3}$ that enclose the first and
second holes respectively, and $ h = 1 + \zeta_{p}$. Now we expand $\hat{\textsl{g}}((1, 1), (0, 0))$
linearly to obtain:
\begin{align*}
\textsl{\textbf{$\hat{g}$}}((1, 1), (0, 0)) &= e_{r-3}^{\{12\}} - \frac{1}{[2]}e_{r-3}^{\{1\}}e_{r-3}^{\{2\}},\\
& = (hu^{\{12\}} - 2) - \frac{1}{[2]}(hu^{\{1\}} - 2)(hu^{\{2\}} - 2), \\
& = hu^{\{12\}} - [2]^{-1}(h^{2}u^{\{1\}}u^{\{2\}} - 2hu^{\{1\}} - 2hu^{\{2\}} + 4 + 2[2]),\\
& = hu^{\{12\}} - [2]^{-1}(h^{2}u^{\{1\}}u^{\{2\}} - 2hu^{\{1\}} - 2hu^{\{2\}} + 2\zeta_{p}^{-1}h^{2}).
\end{align*}
The result follows as $[2]$ is a unit in $\mathcal{O}_{p}$ and the $u$-graphs
lie in $\mathcal{S}_{p}(\Sigma)$. 
\end{proof}

\begin{prop}
The vectors \textbf{\textsl{b}}$(a, b, c, m)$ lie in $\mathcal{S}_{p}(\Sigma)$.
\end{prop}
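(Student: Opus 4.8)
The plan is to reduce the statement to a single divisibility claim and then prove that claim by the fusion/expansion scheme of ~\cite[Section.\,5--6]{GM04}, translated into the $SU(2)$ setting by the dictionary of Section~4. First note that $\hat{\textsl{\textbf{g}}}(a,b,c)$ is an $\mathcal{O}_{p}$-linear combination of skein elements in the fixed handlebody $H_{g}$: the substitution $[i]\mapsto(-1)^{i+1}[i]$ only rescales coefficients by units of $\mathcal{O}_{p}$ (the quantum integers are units, $p$ being twice an odd prime), and recoloring loops from $1$ to $r-3$ again produces skein elements of $H_{g}$. Hence $\hat{\textsl{\textbf{g}}}(a,b,c)\in\mathcal{S}_{p}(\Sigma)$ by Proposition~\ref{p:mixed}, while each $u_{i}$, being $u=\tfrac{e_{r-3}+2}{1+\zeta_{p}}$ stacked onto the $i$-th handle, lies in $\mathcal{S}_{p}(S^{1}\times S^{1})$ by the lemma computing $u$. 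Since the skein module of $H_{g}$ is a module over the absolute skein module, the product $u_{1}^{m_{1}b_{1}}\cdots u_{g}^{m_{g}b_{g}}\hat{\textsl{\textbf{g}}}(a,b,c)$ also lies in $\mathcal{S}_{p}(\Sigma)$. Consequently the entire content of the proposition is that this product is divisible in $\mathcal{S}_{p}(\Sigma)$ by $h^{N}$, where $N=\sum_{i}b_{i}+\lfloor\tfrac12(-e+\sum_{i}a_{i})\rfloor$ and $h=1+\zeta_{p}$; when $N\le 0$ there is nothing to prove, so we may assume $N>0$.

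I would prove the divisibility by separating a \emph{handle part} and an \emph{even-graph part}. For the handle part, expand each loop edge colored $a_{i}+b_{i}$ by fusing it along the incident stick edge colored $2a_{i}$, exactly as in the eyeglass and tripod computations: substituting $e_{r-3}=hu-2$ on loops colored $r-3$ converts the $h$ in the denominator of $u$ into the positive powers of $h$ we need, and an induction on $b_{i}$ --- with base case $b_{i}=0$, which is precisely the eyeglass (resp.\ tripod) identity at a genus-two (resp.\ genus-three) sub-handlebody --- shows that $u_{i}^{m_{i}b_{i}}$ times the contribution of the $i$-th lollipop is divisible by $h^{b_{i}}$. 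The two values $m_{i}\in\{0,1\}$ correspond to the two parity cases: in one the lollipop contribution is already divisible by $h^{b_{i}}$, in the other the stacked factor $u_{i}$ is what supplies the divisibility, mirroring the role of the index $i\in\{0,1\}$ (resp.\ of $\delta_{i}$) in the torus bases $\{u^{ij}/(1+\zeta_{p})^{j}\}$ and $\{t^{i+\delta_{i}p}(\omega_{p})\}$. After the odd excesses $b_{i}$ are absorbed this way, what is left is the graph with all colors even ($2a_{i}$, $2c$, $2e$); under $F$ this is the image of an $SO(3)$-theory vector, and by ~\cite[Lemma.\,(8.2)]{GMW04} together with the small-color bases of the $SO(3)$ theory (the $\{v^{i}\}$ basis of $\mathcal{S}_{r}(S^{1}\times S^{1})$ of Section~3 and its lollipop-tree generalization from ~\cite{GM04,GMW04}) such a vector is divisible by $h^{\lfloor\frac12(-e+\sum_{i}a_{i})\rfloor}$. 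The $3$-ball lemma enters here: it reduces the skein elements that arise near the colored points of $\Sigma$ and near the trivalent vertices to the standard configurations of disjoint arcs and $Y$-shaped components colored $2$, so that the local divisibility can be read off vertex by vertex along the tree $T$. Adding the two contributions gives exactly the exponent $N$.

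The main obstacle is the bookkeeping that makes the two parts genuinely combine. One must track how stacking by $u_{i}$ interacts with the fusion of the loop colored $a_{i}+b_{i}$ so that exactly $b_{i}$, and not fewer, powers of $h$ are produced; and one must check that the floor-function exponent harvested from the $SO(3)$ side really equals $\lfloor\tfrac12(-e+\sum_{i}a_{i})\rfloor$, i.e.\ that the parity of $-e+\sum_{i}a_{i}$ is distributed correctly among the vertices of $T$. This is where the material of Section~4 is indispensable --- Theorem~\ref{t:tensor}, Theorem~\ref{t:invariant}, and Lemma~\ref{l:kappa} --- since the divisibility is in essence a statement about the two tensor factors $\mathcal{S}^{'}_{2}(\Sigma)$ and $\mathcal{S}_{r}(\Sigma)$ separately; one either argues directly in $\mathcal{S}_{p}(\Sigma)$ by fusion as above, or first extends the tensor-decomposition isomorphism of Corollary~\ref{isomophism} from the torus to the relevant pieces of $\Sigma$ and works there. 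Throughout, the genus-two case with no colored points (where a single stick edge is counted twice with $a_{1}=a_{2}$ and there is no trunk) must be handled as a separate base case, matching the special role it already plays in the definition of the colorings.
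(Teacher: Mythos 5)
Your setup is right and matches the paper's: everything reduces to showing that $u_{1}^{m_{1}b_{1}}\cdots u_{g}^{m_{g}b_{g}}\hat{\textsl{\textbf{g}}}(a,b,c)$, which lies in $\mathcal{S}_{p}(\Sigma)$ by Proposition~\ref{p:mixed}, is divisible by $h^{\sum_{i}b_{i}+\left\lfloor\frac{1}{2}(-e+\sum_{i}a_{i})\right\rfloor}$. But the two mechanisms you propose for producing this exponent both have problems. For the $h^{\sum_{i}b_{i}}$ part, the paper does not run an induction on $b_{i}$ via fusion; it simply observes that $\textbf{\textsl{b}}(a,b,c)$ is obtained from $\textbf{\textsl{b}}(a,0,c)$ by surgery along simple loops encircling the loop edges, colored by elements of the torus lattice (the basis elements $u^{ij}/(1+\zeta_{p})^{j}$ of $\mathcal{S}_{p}(S^{1}\times S^{1})$, which already package the factor $h^{-b_{i}}u_{i}^{m_{i}b_{i}}$ together with the color shift from $a_{i}$ to $a_{i}+b_{i}$). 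Since gluing a mixed-graph cobordism preserves $\mathcal{S}_{p}$, this disposes of $b$ in one line. Your version --- ``induction on $b_{i}$ with base case the eyeglass identity'' --- conflates the two exponents: the eyeglass and tripod computations concern loops colored $1$ (hence $r-3$ after hatting), i.e.\ the $a$-part at $b=0$; they say nothing about $b_{i}$, and it is not clear the induction you sketch produces exactly $b_{i}$ powers of $h$.

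For the remaining exponent $\left\lfloor\frac{1}{2}(-e+\sum_{i}a_{i})\right\rfloor$, your appeal to the $SO(3)$ side through $F$ is circular at this point in the paper: divisibility of the image in $\mathcal{S}'_{2}(\Sigma)\otimes_{k_{p}}\mathcal{S}_{r}(\Sigma)$ only transfers back to $\mathcal{S}_{p}(\Sigma)$ once one knows $F$ restricts to an isomorphism of lattices on $\Sigma$ (Corollary~\ref{main}), and that corollary is proved \emph{after}, and using, the present proposition; only the torus case (Corollary~\ref{isomophism}) is available now. The paper instead argues entirely inside the $SU(2)$ lattice: it applies the fusion procedure of ~\cite[Proposition.\,6.2]{GM04} (where the 3-ball lemma is used) to write $\hat{\textsl{\textbf{g}}}(a,0,c)$ as an $\mathcal{O}_{p}$-linear combination of diagrams each containing $\left\lfloor\frac{1}{2}(-e+\sum_{i}a_{i})\right\rfloor$ eyeglasses and tripods, and then invokes the preceding lemma, which proves directly (via $e_{r-3}=hu-2$ and the unit $[2]$) that each eyeglass and tripod is divisible by $h$ in $\mathcal{S}_{p}$. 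You would need to either carry out that direct fusion count or first establish the lattice isomorphism independently; as written, the proposal does neither.
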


\begin{proof}
It is enough to consider the case where $b = 0$ since \textbf{\textsl{b}}$(a, b, c, m)$
is obtained from \textbf{\textsl{b}}$(a, 0, c, m)$ by doing surgery along simple loops.
So we need to show that $\hat{\textsl{g}}(a, 0, c)$ is divisible by 
$(1 + \zeta_{p})^{ - \left\lfloor \frac{1}{2}\sum_{i}a_{i}\right\rfloor}$.

We apply the same procedure given in the ~\cite[Proposition.\,6.2]{GM04} to the colored
graph representing $\textsl{g}(a, 0, c)$. Thus  $\hat{\textsl{g}}(a, 0, c)$
is represented as a linear combination over $\mathcal{O}_{p}$ of diagrams with
$  \left\lfloor\frac{1}{2}\sum_{i}a_{i})\right\rfloor$ eyeglasses
as above. Using the previous lemma, we obtain the required result.
\end{proof}


\begin{prop}
For the map $F$ defined in Theorem \ref{t:tensor}, we have
\[
F(\textbf{\textsl{b}}(a, b, c, m)) = H_{m_{1}\ldots m_{g}}\otimes_{k_{p}} \textbf{\textsl{\emph{b}}}(a, b, c)
\]
where $H_{m_{1}\ldots m_{g}}$ is a basis element for $\mathcal{\widehat{S}}_{2}^{'}(\Sigma)$
described in Theorem \ref{2theory} and \textbf{\textsl{\emph{b}}}$(a, b, c)$
is a basis element for $\mathcal{\widehat{S}}_{r}(\Sigma)$ defined in ~\cite[Section.\,4]{GM04}.
\end{prop}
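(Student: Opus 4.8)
The plan is to exploit that $F$ is $k_{p}$-linear and, being a natural isomorphism (Theorem~\ref{t:tensor}), intertwines the action of the absolute skein module of $H_{g}$ on $V_{p}(\Sigma)$ by stacking with the corresponding action on $V'_{2}(\Sigma)\otimes_{k_{p}}V_{r}(\Sigma)$; the latter follows by expanding each skein element into honest banded links, applying the formula $F([M]_{p})=[M]'_{2}\otimes_{k_{p}}[M]_{r}$ of Theorem~\ref{t:tensor} to each, and summing. Consequently $F$ of a product computes factorwise, so it suffices to determine $F$ on the three ingredients of $\textbf{\textsl{b}}(a,b,c)$: the scalar $h^{-N}$ with $N=\sum_{i}b_{i}+\left\lfloor\frac12(-e+\sum_{i}a_{i})\right\rfloor$, the stacking operators $u_{i}^{m_{i}b_{i}}$, and the modified colored graph $\hat{\textsl{\textbf{g}}}(a,b,c)$.

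The scalar $h^{-N}=(1+\zeta_{p})^{-N}\in k_{p}$ passes through $F$ untouched; the content is only to see that it is absorbed entirely into the $\mathcal{S}_{r}$-factor, matching the normalizing power of $1+A_{r}$ in the $SO(3)$ basis element of \cite[Section~4]{GM04}, with nothing left on the $\mathcal{S}'_{2}$-factor --- which is forced, since $2$ and all quantum integers are units of $\mathcal{O}_{2}$ so no power of the non-unit $h$ can persist there. For the operators $u_{i}$, the preceding lemma gives $F(u_{i})=1\otimes v_{i}$, where $v_{i}$ is the copy of $v=\frac{z+2}{1+A_{r}}$ supported in the $i$th handle, and its proof rests on $F(e_{r-3})=1\otimes z$, i.e. on the fact that the $p=2$-reduction of the colour-$(r-3)$ loop is the power $z^{r-3}=1$ of $z$. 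Combined with the intertwining property, this shows that stacking $u_{1}^{m_{1}b_{1}}\cdots u_{g}^{m_{g}b_{g}}$ acts trivially on the $\mathcal{S}'_{2}$-factor and as multiplication by $v_{1}^{m_{1}b_{1}}\cdots v_{g}^{m_{g}b_{g}}$ on the $\mathcal{S}_{r}$-factor.

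The substantive step is $F(\hat{\textsl{\textbf{g}}}(a,b,c))$. Expanding the Jones--Wenzl idempotents writes $\hat{\textsl{\textbf{g}}}(a,b,c)$ as an $\mathcal{O}_{p}$-combination of honest banded links $L$ in $H_{g}$, and Theorem~\ref{t:tensor} gives $F([H_{g},L])=[H_{g},L]'_{2}\otimes_{k_{p}}[H_{g},L]_{r}$ for each. The two modifications in the definition of $\hat{\textsl{\textbf{g}}}$ are exactly what make this combination factor as a simple tensor after applying $F$: the replacement of $[i]$ by $(-1)^{i+1}[i]$ compensates the discrepancy between the $SU(2)$ and $SO(3)$ Jones--Wenzl recursions transported through the ring maps $i_{r},j_{r}$ of Lemma~\ref{l:maps}, and the replacement of each loop coloured $1$ by a loop coloured $r-3$ is the dictionary recorded in Section~4 (cf. Figure~\ref{fusionr} and $F(e_{r-3})=1\otimes z$): an $SU(2)$ loop coloured $1$ corresponds to colour $1$ on the $p=2$ side tensored with colour $r-3$ on the $SO(3)$ side, so rewriting it via $e_{r-3}$ puts its nontrivial content onto the $\mathcal{S}_{r}$-factor while recording its presence on the $\mathcal{S}'_{2}$-factor. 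Carrying this through edge by edge --- the even stick colours $2a_{i}$, the small loop colours $a_{i}+b_{i}$, the ordinary colours $c_{j}$, and the trunk colour $2e$, with the $r$-admissibility of $(2a,c)$ on $T$ ensuring the vertices behave --- identifies the $\mathcal{S}_{r}$-part of $F(\hat{\textsl{\textbf{g}}}(a,b,c))$ with the colored graph underlying the $SO(3)$ basis element $\textbf{\textsl{b}}(a,b,c)$ of \cite[Section~4]{GM04}, and the $\mathcal{S}'_{2}$-part with the graph of Theorem~\ref{2theory} whose $k$th loop is coloured $\omega_{2}$ or $t(\omega_{2})$ according as the $p=2$-reduction of the $k$th $SU(2)$ loop is $0$ or $1$ --- which is precisely the pattern $i_{1}\dots i_{g}$ defining $H_{i_{1}\dots i_{g}}$. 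Alternatively one may cut $H_{g}$ along meridian disks and argue by naturality of $F$ over the resulting pair-of-pants/annulus decomposition, using the genus-one statements (Theorem~\ref{t:genus1}, Corollary~\ref{isomophism}) on the pieces and the compatibility of $F$ with fusion at the vertices; the two arguments amount to the same bookkeeping.

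The main obstacle is exactly this bookkeeping: one must verify, with the conventions of \cite[Section~6]{BHMV3} and \cite[Section~2]{BHMV2} in hand, that the sign twist $[i]\mapsto(-1)^{i+1}[i]$ together with the colour substitution $1\mapsto r-3$ reproduces the factorization $F([L])=[L]'_{2}\otimes_{k_{p}}[L]_{r}$ term by term after expanding the idempotents, and --- more delicately --- that the two resulting factors are \emph{literally} the prescribed basis elements $H_{i_{1}\dots i_{g}}$ of $\mathcal{S}'_{2}(\Sigma)$ and $\textbf{\textsl{b}}(a,b,c)$ of $\mathcal{S}_{r}(\Sigma)$, not merely elements of their spans, with no stray power of the non-unit $h$ gained or lost in either factor. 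A secondary check is that the exponent $-N$ and the binary choices $m_{i}$ are consistent with the normalization of $\textbf{\textsl{b}}(a,b,c)$ in \cite[Section~4]{GM04}; this is where the floor $\left\lfloor\frac12(-e+\sum_{i}a_{i})\right\rfloor$ and the constraint $0\le b_{i}\le d_{r}-1-a_{i}$ enter, and it should fall out once the $\mathcal{S}_{r}$-factor has been pinned down.
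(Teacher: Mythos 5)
Your proposal takes essentially the same route as the paper, whose entire argument is the single remark that one applies $F$ to each term of the $\mathcal{O}_{p}$-linear combination representing $\textbf{\textsl{b}}(a,b,c)$ and invokes $F([M]_{p})=[M]^{'}_{2}\otimes_{k_{p}}[M]_{r}$; your write-up simply spells out (and honestly flags as unfinished) the bookkeeping that the paper leaves implicit. No discrepancy in approach, and nothing in your sketch contradicts the paper's argument.
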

To obtain a proof for the above proposition, we apply the map $F$
to each term of the linear combination that represents $\textbf{\textsl{b}}(a, b, c, m)$.
Hence we conclude that $F(\mathcal{B})$ generates
$\mathcal{\widehat{S}}_{2}^{'}(\Sigma)\otimes_{k_{p}} \mathcal{\widehat{S}}_{r}(\Sigma)$, but it
is clear that $F(\mathcal{S}_{p}(\Sigma)) \subseteq \mathcal{\widehat{S}}_{2}^{'}(\Sigma)\otimes_{k_{p}} \mathcal{\widehat{S}}_{r}(\Sigma)$.
Therefore, we conclude that $\mathcal{B}$ is a basis for $\mathcal{S}_{p}(\Sigma)$

\begin{cor}\label{main}
The isomorphism $F$ in Theorem \ref{t:tensor} induces an isomorphism between
$
\mathcal{S}_{p}(\Sigma)$ and $ {\mathcal{\widehat{S}}}_{2}^{'}(\Sigma)\otimes_{k_{p}} {\mathcal{\widehat{S}}}_{r}(\Sigma)
$
\end{cor}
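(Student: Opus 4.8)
The plan is to obtain this as a formal consequence of the three facts already in place in this section: that the map $F$ of Theorem~\ref{t:tensor} is a $k_{p}$-linear isomorphism of the ambient free modules; that $\mathcal{B}$ is a basis of $\mathcal{S}_{p}(\Sigma)$; and that $F$ sends each generator $\mathbf{b}(a,b,c)$ of $\mathcal{B}$ to the elementary tensor $H_{i_{1}\ldots i_{g}}\otimes_{k_{p}}\mathbf{b}(a,b,c)$, with the first factor a basis vector of $\mathcal{S}_{2}^{'}(\Sigma)$ as in Theorem~\ref{2theory} and the second a basis vector of $\mathcal{S}_{r}(\Sigma)$ as in \cite{GM04}. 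Here, as in Corollary~\ref{isomophism}, the symbol ${\mathcal{S}}_{2}^{'}(\Sigma)\otimes_{k_{p}}{\mathcal{S}}_{r}(\Sigma)$ is read as the $\mathcal{O}_{p}$-span, inside $V_{2}^{'}(\Sigma)\otimes_{k_{p}}V_{r}(\Sigma)$, of the tensor products of basis vectors of the two factor lattices.

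First I would note that, being the restriction of the isomorphism $F$, the map $F|_{\mathcal{S}_{p}(\Sigma)}$ is an injective $\mathcal{O}_{p}$-module homomorphism, so only its image must be identified. The inclusion $F(\mathcal{S}_{p}(\Sigma))\subseteq{\mathcal{S}}_{2}^{'}(\Sigma)\otimes_{k_{p}}{\mathcal{S}}_{r}(\Sigma)$ holds because $F$ sends a connected vacuum state $[M]_{p}$ to $[M]_{2}^{'}\otimes_{k_{p}}[M]_{r}$ by equation~(\ref{e:map}), and $\mathcal{S}_{p}(\Sigma)$ is generated over $\mathcal{O}_{p}$ by such states. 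For the reverse inclusion, since $\mathcal{B}$ generates $\mathcal{S}_{p}(\Sigma)$ its image $F(\mathcal{B})$ generates $F(\mathcal{S}_{p}(\Sigma))$; and applying $F$ termwise to the linear combinations representing the $\mathbf{b}(a,b,c)$ shows, as already observed, that $F(\mathcal{B})$ generates all of ${\mathcal{S}}_{2}^{'}(\Sigma)\otimes_{k_{p}}{\mathcal{S}}_{r}(\Sigma)$. Hence the image is exactly the tensor product and $F$ restricts to an $\mathcal{O}_{p}$-isomorphism onto it.

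To upgrade this surjection to an isomorphism of free lattices with matched bases, I would use that $\mathcal{B}$ is now known to be a basis: then $F(\mathcal{B})=\{\,H_{i_{1}\ldots i_{g}}\otimes_{k_{p}}\mathbf{b}(a,b,c)\,\}$ is the image of a basis under an injection, hence a basis of $F(\mathcal{S}_{p}(\Sigma))$, so the index set of $\mathcal{B}$ is in bijection with the product of the index set of the Theorem~\ref{2theory} basis of $\mathcal{S}_{2}^{'}(\Sigma)$ and that of the \cite{GM04} basis of $\mathcal{S}_{r}(\Sigma)$. The correspondence is geometrically transparent: the even data $(2a,c)$ (with trunk color $2e$) that is an $r$-admissible coloring of the tree $T$ is precisely a \cite{GM04} label for $\mathcal{S}_{r}(\Sigma)$; the binary choices $m_{i}\in\{0,1\}$ of whether each loop carries $\omega_{2}$ or $t(\omega_{2})$ give the handle-by-handle factors recorded by $H_{i_{1}\ldots i_{g}}$; and the integers $b_{i}$ of~(\ref{small2}) together with the power of $h=1+\zeta_{p}$ appearing in the definition of $\mathbf{b}(a,b,c)$ are exactly the divisibility corrections that line the image up with the \emph{normalized} basis vectors on the two sides.

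The step I expect to be the main obstacle is this last bookkeeping: verifying that the index set underlying $\mathcal{B}$ factors exactly as the product of the two factor index sets --- that the ranges~(\ref{small}) and~(\ref{small2}) and the $r$-admissibility of $(2a,c)$ on $T$ combine into precisely the data needed for both factors with nothing left over or double-counted, and in particular that the genus $2$, $s=0$ convention (the stick edge counted twice with $a_{1}=a_{2}$) and the floor term in the exponent of $h$ are compatible with the analogous conventions underlying Theorem~\ref{2theory} and \cite{GM04}. This is of the same nature as the bijection already carried out for $S^{1}\times S^{1}$ in Corollary~\ref{isomophism}, now propagated along the lollipop tree; once it is checked, the corollary follows at once.
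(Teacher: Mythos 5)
Your proposal is correct and follows essentially the same route as the paper: the corollary is stated there without a separate proof because it is exactly the consequence of the preceding proposition (that $F$ carries each $\textbf{\textsl{b}}(a,b,c)$ to $H_{i_{1}\ldots i_{g}}\otimes_{k_{p}}\textbf{\textsl{b}}(a,b,c)$), the inclusion $F(\mathcal{S}_{p}(\Sigma))\subseteq\mathcal{S}_{2}^{'}(\Sigma)\otimes_{k_{p}}\mathcal{S}_{r}(\Sigma)$, and the fact that $\mathcal{B}$ is a basis. Your additional bookkeeping on the factorization of the index sets just makes explicit what the paper leaves implicit in that proposition.
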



\section{The Frohman Kania-Bartoszynska ideal}
This ideal was first introduced by Frohman and Kania-Bartoszynska in ~\cite{FK}.

\begin{defn}
Let $N$ be a 3-manifold with boundary, we define
$\mathcal{J}_{p}(N)$ to be the ideal generated over
$\mathcal{O}_{p}$ by
\[
\{ I_{p}(M)| \ \text{where}\ M \ \text{is a closed connected
3-manifold containing}\ N \}.
\]
\end{defn}
The importance of this ideal is in being an invariant of
3-manifolds with boundary and an obstruction to embedding as
stated below (see ~\cite{FK} for more details).
\begin{prop}
The ideal $\mathcal{J}_{p}$ is an invariant of oriented
3-manifolds with boundary.
\end{prop}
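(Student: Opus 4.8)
The plan is to deduce the invariance of $\mathcal{J}_{p}$ from the fact that the normalized invariant $I_{p}$ of a closed connected $3$-manifold depends only on the oriented diffeomorphism type of the manifold, together with the observation that the defining set of generators is itself described purely in terms of the oriented diffeomorphism type of $N$. First I would record the precise claim: if $N$ and $N'$ are oriented $3$-manifolds with boundary and $\varphi\colon N \to N'$ is an orientation-preserving diffeomorphism, then $\mathcal{J}_{p}(N) = \mathcal{J}_{p}(N')$ as ideals of $\mathcal{O}_{p}$.

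Next I would set up the generator-by-generator correspondence. Let $M$ be any closed connected $3$-manifold containing $N$, say via an embedding $\iota\colon N \hookrightarrow M$. I would like to produce from $M$ a closed connected $3$-manifold $M'$ containing $N'$ with $I_{p}(M') = I_{p}(M)$. The construction is to glue: take $M'$ to be $(M \setminus \operatorname{int} N) \cup_{\partial} N'$, where the gluing of $\partial N'$ to $\partial(M\setminus\operatorname{int}N)$ is done using the boundary restriction of $\varphi^{-1}$ (and its relation to $\iota$). Then $\varphi$ extends to an orientation-preserving diffeomorphism $M \to M'$ (identity away from the pieces, $\varphi$ on the $N$-piece), so $I_{p}(M') = I_{p}(M)$ since $I_{p}$ is an invariant of closed oriented $3$-manifolds. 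Moreover $M'$ is connected because $M$ is, and $N' \subseteq M'$ by construction. Hence every generator $I_{p}(M)$ of $\mathcal{J}_{p}(N)$ equals a generator $I_{p}(M')$ of $\mathcal{J}_{p}(N')$, giving $\mathcal{J}_{p}(N) \subseteq \mathcal{J}_{p}(N')$. Applying the same argument to $\varphi^{-1}$ gives the reverse inclusion, and hence equality.

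The main technical point to be careful about — and the step I would expect to demand the most attention — is the gluing construction and the verification that the resulting cut-and-paste manifold $M'$ is again a genuine closed oriented $3$-manifold with a well-defined embedded copy of $N'$, and that $\varphi$ really does extend to an orientation-preserving diffeomorphism $M\to M'$. One must match collar neighborhoods of $\partial N$ and $\partial N'$ under $\varphi$ so that the glued manifold is smooth, and check orientation conventions so that $M'$ inherits a consistent orientation agreeing with that of $M$ under the extended map. Once this is in place, the invariance of $I_{p}$ for closed manifolds (which is part of the TQFT data recalled in Section~1) does all the remaining work.

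A slightly slicker alternative, which I would mention as a remark, avoids the explicit cut-and-paste: the set $\{\,M \text{ closed connected} : N \hookrightarrow M\,\}$, considered up to orientation-preserving diffeomorphism together with the embedded $N$, is manifestly a diffeomorphism invariant of $N$, and $I_{p}$ is constant on orientation-preserving diffeomorphism classes; therefore the set of generators $\{I_{p}(M)\}$, and hence the ideal it generates, depends only on the oriented diffeomorphism type of $N$. Either presentation yields the proposition; I would write up the cut-and-paste version for concreteness and keep the remark as a conceptual summary.
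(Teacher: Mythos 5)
Your argument is correct, but note that the paper itself offers no proof of this proposition at all --- it is quoted from Frohman and Kania-Bartoszynska's paper [FK] with a pointer to that reference --- so there is no in-paper argument to compare against. Your cut-and-paste construction does establish the claim: gluing $N'$ into $M\setminus\operatorname{int}N$ along $\varphi|_{\partial N}$ produces an $M'$ diffeomorphic to $M$ and containing $N'$, so the generators of the two ideals coincide. However, the construction is more machinery than the statement requires. Since ``$M$ contains $N$'' here means that $N$ admits an embedding $\iota\colon N\hookrightarrow M$, and since $I_{p}$ depends only on the oriented diffeomorphism class of the closed manifold $M$ (not on the embedded copy of $N$), the two generating sets are \emph{literally identical}: any $M$ containing $N$ also contains $N'$ via $\iota\circ\varphi^{-1}$, and conversely. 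Your own ``slicker alternative'' remark is essentially this observation, and it is the proof one would actually write; the cut-and-paste version only becomes necessary if one insists on a literal set-theoretic containment $N\subseteq M$ rather than containment up to embedding, and even then the glued manifold $M'$ you build is diffeomorphic to $M$, so the collar-matching and orientation bookkeeping you flag as the delicate step is standard and does not hide any real difficulty. In short: the proof is correct, the conceptual one-liner suffices, and the paper delegates the whole matter to [FK].
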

\begin{prop}
If $N_{1}, N_{2}$ are an oriented compact 3-manifolds, and $N_{1}$
embeds in $N_{2}$, then
$\mathcal{J}_{p}(N_{2})\subseteq\mathcal{J}_{p}(N_{1})$.
\end{prop}
\begin{rem}
Frohman and Kania-Bartoszynska defined this ideal using the
$SU(2)$-TQFT-theory. Afterward, Gilmer and Masbaum \cite{GM04} defined $\mathcal{J}_{r}(N)$ in the same way using the $SO(3)$-TQFT-theory. Also Gilmer \cite{G05} defined $\mathcal{J}_{2}(N)$ using the $2^{'}$-theory.
\end{rem}

Following his work with Masbaum in the case $p$ an odd prime,
Gilmer observed that $\mathcal{J}_{p}(N)$ is finitely generated
based on his result that $\mathcal{S}_{p}(\Sigma)$ is finitely
generated. We give a
finite set of generators for this ideal as an application of our
main theorem.

\begin{thm}
Let $N$ be an oriented 3-manifold with boundary $\Sigma$, then $\mathcal{J}_{p}(N)$ is finitely
generated by all scalars $([N],\textbf{\textsl{b}})_{\Sigma}$ as $\textbf{\textsl{b}}$ varies over a
basis for $\mathcal{S}_{p}(\Sigma)$.
\end{thm}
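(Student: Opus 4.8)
The plan is to reduce the statement to facts already established in the excerpt, principally Corollary~\ref{main} (the isometry $\mathcal{S}_p(\Sigma)\cong\mathcal{S}^{'}_2(\Sigma)\otimes_{k_p}\mathcal{S}_r(\Sigma)$), Theorem~\ref{t:invariant} (the product formula $I_{2r}(M)=i_r(I^{'}_2(M))j_r(I_r(M))$), and the description of $\mathcal{S}_p(\Sigma)$ as generated by connected vacuum states together with the mixed-graph/Proposition~\ref{p:mixed} picture. First I would fix a closed connected $3$-manifold $M$ containing $N$. Write $M=N\cup_\Sigma N'$ where $N'$ is the complementary piece, so that $[N']_p\in\mathcal{S}_p(\Sigma)$ is a connected vacuum state (after absorbing the weight into a power of $\mathcal{D}_p$, which is a unit in $k_p$ but we must track its $\mathcal{O}_p$-content; this is exactly the kind of bookkeeping that the normalizations $I_p$ versus $\langle\ \rangle_p$ are designed to handle). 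The key identity is that $I_p(M)$ is, up to a controlled unit, the value of the form $([N]_p,[N']_p)_\Sigma$, since the form on $\mathcal{S}_p(\Sigma)$ is defined precisely by gluing two cobordisms along $\Sigma$ and taking $\mathcal{D}_p$ times the closed invariant (Equation~\eqref{e:form2}).

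Next I would argue the two inclusions. For ``$\supseteq$'': each generator $([N],\textbf{\textsl{b}})_\Sigma$ with $\textbf{\textsl{b}}\in\mathcal{B}$ is of the form $\mathcal{D}_p\langle N\cup_\Sigma -M_{\textbf{\textsl{b}}}\rangle_p$ where $M_{\textbf{\textsl{b}}}$ is a handlebody (or mixed-graph) cobordism realizing $\textbf{\textsl{b}}$; gluing gives a genuine closed connected $3$-manifold containing $N$, so this scalar is $I_p$ of that manifold (again modulo tracking weights and the $\mathcal{D}_p$ normalization), hence lies in $\mathcal{J}_p(N)$, and therefore the ideal they generate is contained in $\mathcal{J}_p(N)$. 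For ``$\subseteq$'': given an arbitrary closed connected $M\supseteq N$, write $M=N\cup_\Sigma N'$ and expand $[N']_p$ in the basis $\mathcal{B}$, $[N']_p=\sum_{\textbf{\textsl{b}}} \lambda_{\textbf{\textsl{b}}}\,\textbf{\textsl{b}}$ with $\lambda_{\textbf{\textsl{b}}}\in\mathcal{O}_p$ since $\mathcal{B}$ is an $\mathcal{O}_p$-basis of $\mathcal{S}_p(\Sigma)$ by the main theorem. Then $I_p(M)=\sum_{\textbf{\textsl{b}}}\bar{\lambda}_{\textbf{\textsl{b}}}\,([N],\textbf{\textsl{b}})_\Sigma$ (sesquilinearity), exhibiting $I_p(M)$ as an $\mathcal{O}_p$-combination of the proposed generators. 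Summing over all $M$ gives $\mathcal{J}_p(N)$ inside the ideal generated by $\{([N],\textbf{\textsl{b}})_\Sigma\}$, and finiteness is then immediate because $\mathcal{B}$ is finite.

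I would want to state carefully one lemma first: that for a closed connected $M=N\cup_\Sigma N'$ one has $I_p(M)=([N]_p,[N']_p)_\Sigma$ exactly (not just up to units), possibly after adjusting the weight of $N'$ to $0$ — this is where $M_\flat$ and the factor $\mathcal{D}_p^{\beta_1}$ enter, and it is essentially in \cite{FK,GM04} but should be recalled so the argument is self-contained. With that lemma the whole proof is a two-line expansion. The main obstacle is precisely this normalization bookkeeping: making sure the passage between $\langle\ \rangle_p$, $I_p$, and the form $(\ ,\ )_\Sigma$ does not introduce spurious non-unit factors of $\mathcal{D}_p$ or $\kappa_p$, and checking that ``containing $N$'' can always be realized as a gluing along all of $\Sigma$ so that $[N']_p$ really is a connected vacuum state lying in $\mathcal{S}_p(\Sigma)$ — one may need to note that $N'$ can be taken connected (tube together components) without changing the ambient closed manifold. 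Once those points are settled, the use of Corollary~\ref{main} is only cosmetic here; the real engine is the freeness of $\mathcal{S}_p(\Sigma)$ on the finite set $\mathcal{B}$ established in Section~6.
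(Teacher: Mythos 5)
Your proof is correct and is precisely the argument the paper intends: the paper in fact gives no proof of this theorem, only the remark that it is analogous to \cite[Theorem.\,16.5]{GM04}, and that analogous argument is exactly your two-step expansion --- writing a closed connected $M\supseteq N$ as $N\cup_\Sigma N'$, expanding the complementary connected vacuum state in the finite $\mathcal{O}_p$-basis $\mathcal{B}$, and using sesquilinearity for one inclusion, and the fact that each $\textbf{\textsl{b}}$ is an $\mathcal{O}_p$-combination of connected vacuum states for the other. The bookkeeping points you flag are the right ones and are easily settled: reassigning weights only changes invariants by powers of the unit $\kappa_p\in\mathcal{O}_p$, and $N'$ is automatically connected because $\Sigma$ is connected and $M$ is connected.
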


\begin{rem}
This result is analogous to ~\cite[Theorem.\,16.5]{GM04}.
\end{rem}

Finally, the following theorem gives the relation between
the Frohman Kania-Bartoszynska ideals using the $SU(2)$-TQFT and the
$SO(3)$-TQFT theories.
\begin{thm}\label{t:ideal}
Let $N$ be an oriented compact 3-manifold with boundary. Then we
have
\[
\mathcal{J}_{p}(N) =
i_{r}(\mathcal{J}_{2}(N))j_{r}(\mathcal{J}_{r}(N)),
\]
where $i_{r}$ and $j_{r}$ are defined as before.
\end{thm}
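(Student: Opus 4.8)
The plan is to reduce the identity of ideals to the multiplicativity formula for the normalized invariant, Theorem~\ref{t:invariant}, combined with the factorization of $\mathcal{S}_p(\Sigma)$ established in Corollary~\ref{main}. First I would fix an oriented compact $3$-manifold $N$ with $\partial N = \Sigma$ and invoke the previous theorem, which identifies $\mathcal{J}_p(N)$ with the ideal generated by all pairings $([N]_p, \textbf{\textsl{b}})_\Sigma$ as $\textbf{\textsl{b}}$ ranges over a basis of $\mathcal{S}_p(\Sigma)$; the same statement applied to the $2$-theory and the $r$-theory describes $\mathcal{J}_2(N)$ and $\mathcal{J}_r(N)$ in terms of pairings against bases of $\mathcal{S}_2'(\Sigma)$ and $\mathcal{S}_r(\Sigma)$. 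This converts the theorem into a purely algebraic statement about the Hermitian forms on these three lattices.

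Next I would use Corollary~\ref{main}, which says $F$ induces an isomorphism $\mathcal{S}_p(\Sigma) \cong \mathcal{S}_2'(\Sigma) \otimes_{k_p} \mathcal{S}_r(\Sigma)$ carrying $[N]_p$ to $[N]_2' \otimes_{k_p} [N]_r$ (this last is the content of Eq.~(\ref{e:map}) in Theorem~\ref{t:tensor}). Under this isomorphism, the explicit basis $\mathcal{B}$ of $\mathcal{S}_p(\Sigma)$ matches up, via the preceding proposition, with the tensor products $H_{i_1\ldots i_g} \otimes_{k_p} \textbf{\textsl{b}}(a,b,c)$ of basis elements of the two factor lattices. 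The key computation is then that the pairing factors: for a closed manifold $M$ containing $N$ one has, writing $[M]_p$ in terms of $[N]_p$ and a gluing cobordism,
\[
([N]_p, \textbf{\textsl{b}})_\Sigma = i_r\bigl(([N]_2', H)_\Sigma\bigr)\, j_r\bigl(([N]_r, \textbf{\textsl{b}}')_\Sigma\bigr),
\]
which comes from feeding Theorem~\ref{t:invariant}, $I_{2r}(M) = i_r(I_2'(M)) j_r(I_r(M))$, through the definition (\ref{e:form2}) of the $\mathcal{O}_p$-form and the compatibility of $\mathcal{D}_p$ with $\mathcal{D}_2, \mathcal{D}_r$ under $i_r, j_r$. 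Taking the ideal generated by all such products, and using that $i_r$ and $j_r$ are ring homomorphisms so that the product of the images of two ideals is generated by the products of generators, yields $\mathcal{J}_p(N) = i_r(\mathcal{J}_2(N)) j_r(\mathcal{J}_r(N))$.

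I expect the main obstacle to be bookkeeping around the normalizations and the change of rings: Theorem~\ref{t:invariant} is stated for the invariants $I_n$ of \emph{closed} manifolds, whereas the ideal is defined via $I_p$ of closed $M \supseteq N$, and one must check that the factorization is compatible with expressing $I_p(M)$ as a pairing $([N]_p, [M\setminus N]_p)_\Sigma$ on the boundary surface — i.e. that the isometry induced by $F$ (using the Lemma on form-preserving maps at the end of Section~4) intertwines the three forms after applying $i_r$ and $j_r$ to the target. One subtlety is that $\mathcal{S}_2'(\Sigma) \otimes_{k_p} \mathcal{S}_r(\Sigma)$ carries the form $(\ ,\ )_2 \otimes (\ ,\ )_r$ only after base change along $i_r, j_r$, so the pairing identity above should be read with that understood. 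Once these normalization constants are pinned down (and here the hypothesis $p = 2r$ with $r$ an odd prime, so that $\mathcal{O}_r$ and $\mathcal{O}_2$ are the relevant Dedekind domains, is used), the inclusion $\mathcal{J}_p(N) \subseteq i_r(\mathcal{J}_2(N)) j_r(\mathcal{J}_r(N))$ and its reverse both follow from the two generating sets matching up under $F$, and no further geometric input is needed.
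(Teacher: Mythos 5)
Your proposal is correct and follows the same route the paper intends: the paper's own proof is just the one-line citation ``the result follows as a consequence of Corollary~\ref{main}'', and your argument supplies exactly the details that citation suppresses --- reducing both sides to generators via the finite-generation theorem, matching the generating sets under $F$ using Corollary~\ref{main} and the basis proposition, and factoring the pairings via Theorem~\ref{t:invariant}. The normalization bookkeeping you flag (compatibility of $\mathcal{D}_p$ with $i_r$, $j_r$) is indeed the only point needing care, and it is handled by the fact that Theorem~\ref{t:invariant} is stated for the already-normalized invariants $I_p$.
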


\begin{proof}
The result follows as a consequence of corollary \ref{main}.
\end{proof}

\bibliographystyle{amsalpha}

\end{document}